\documentclass[reqno,12pt,letterpaper]{amsart}
\usepackage{amsmath,amssymb,amsthm,graphicx,mathrsfs,url}
\usepackage[applemac]{inputenc}
\usepackage[usenames,dvipsnames]{color}
\usepackage[colorlinks=true,linkcolor=Red,citecolor=Green]{hyperref}
\usepackage{enumitem}
\usepackage[russian, francais, english]{babel}
\usepackage[dvipsnames]{xcolor}
\usepackage{csquotes}
\usepackage[all]{xy}

\def\?[#1]{\textbf{[#1]}\marginpar{\Large{\textbf{??}}}}

\setlength{\textheight}{8.50in}\setlength{\oddsidemargin}{0.00in}
\setlength{\evensidemargin}{0.00in} \setlength{\textwidth}{6.08in}
\setlength{\topmargin}{0.00in} \setlength{\headheight}{0.18in}
\setlength{\marginparwidth}{1.0in}
\setlength{\abovedisplayskip}{0.2in}
\setlength{\belowdisplayskip}{0.2in}
\setlength{\parskip}{0.05in}

% Include all pictures as MetaPOST
\DeclareGraphicsRule{*}{mps}{*}{}

\newtheorem{theo}{Theorem}
\newtheorem{prop}{Proposition}[section]

\newtheorem{lemm}[prop]{Lemma}
\theoremstyle{definition}
\newtheorem{corr}[prop]{Corollary}

\newtheorem{rem}[prop]{Remark}

\numberwithin{equation}{section}

\newcommand{\R}{\mathbb{R}}
\newcommand{\N}{\mathbb{N}}
\renewcommand{\C}{\mathbb{C}}

\newcommand{\Z}{\mathbb{Z}}

\newcommand{\vol}{\mathrm{vol}}

\newcommand{\dd}{\mathrm{d}}
\newcommand{\Lie}{\mathcal{L}}

\newcommand{\ind}{\mathrm{ind}}

\let\Re=\Real

\DeclareMathOperator{\supp}{supp}

\DeclareMathOperator{\WF}{WF}

\DeclareMathOperator{\id}{Id}

%%%%%%%%%%%%%%%%%%%%%%%%%%%%%%%%%%%%%%%%%%%%%%%%%%%%%%%%%%%%%%%%%%%%%%%%%%%%%%%%
\title{Poincar\'e series for surfaces with boundary}
\author[Y.~Chaubet]{Yann Chaubet}
\address{Institut de Math\'ematiques d'Orsay, Facult\'e des sciences d'Orsay, Universit\'e Paris-Saclay, 91405 Orsay, France}
\email{yann.chaubet@universite-paris-saclay.fr}

\begin{document}

\maketitle

\begin{abstract}
We provide a meromorphic continuation for Poincar\'e series counting orthogeodesics of a negatively curved surface with totally geodesic boundary, as well as for Poincar\'e series counting geodesic arcs linking two points. For the latter series, we show that the value at zero coincides with the inverse of the Euler characteristic of the surface.
\end{abstract}

\section{Introduction}
Let $(\Sigma, g)$ be a connected oriented negatively curved surface with totally geodesic boundary $\partial \Sigma$. We denote by $\mathcal{G}^\perp$ the set of orthogeodesics of $\Sigma,$ that is, the set of geodesics $\gamma : [0, \ell] \to \Sigma$ (parametrized by arc length) such that $\gamma(0), \gamma(\ell) \in \partial \Sigma$, $\gamma'(0) \perp T_{\gamma(0)} \partial \Sigma$ and $\gamma'(\ell) \perp T_{\gamma(\ell)}\partial \Sigma$. For large $\Re(s)$ the Poincar\'e series
\begin{equation}\label{eq:defeta}
\eta(s) = \sum_{\gamma \in \mathcal{G}^\perp} {e}^{-s\ell(\gamma)},
\end{equation}
where $\ell(\gamma)$ denotes the length of the geodesic arc $\gamma$, {is well defined (see \S\ref{subsec:pairing}). In this paper we will prove the following} 
\begin{theo}\label{thm:main}
The Poincar\'e series $s \mapsto \eta(s)$ extends meromorphically to the whole complex plane and vanishes at $s=0$.
\end{theo}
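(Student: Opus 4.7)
The strategy is to realize $\eta(s)$ as a matrix coefficient of the resolvent of the geodesic vector field on the unit tangent bundle of the isometric double of $\Sigma$, and then to apply the meromorphic continuation of Anosov resolvents on anisotropic Sobolev spaces due to Faure--Sj\"ostrand and Dyatlov--Zworski.

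First I would double the surface: since $\pl\Sigma$ is totally geodesic, $M := \Sigma \cup_{\pl\Sigma}\Sigma$ carries a smooth Riemannian metric of negative curvature, together with a natural isometric involution $\sigma$ exchanging the two copies. The generator $X$ of the geodesic flow on $N := SM$ defines an Anosov flow. The set $Z \subset N$ of unit vectors based at $\pl\Sigma$ and orthogonal to it is a compact one-dimensional submanifold with two components $Z^\pm$, according to which copy of $\Sigma$ the vector points into, and $\sigma$ swaps $Z^+$ and $Z^-$. Orthogeodesics of $\Sigma$ of length $\ell$ are then in natural bijection with orbit segments of the flow $\varphi_t := \e^{tX}$ of duration $\ell$ that start on $Z^+$ and end on $Z^-$.

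Equipping each $Z^\pm$ with a natural transverse density, let $\tau^\pm \in \mc{D}'(N)$ be the associated currents of integration. A direct computation then yields, for $\Re(s)$ large enough,
\begin{equation*}
\eta(s) = \bigl\langle \tau^-,\, R(s)\,\tau^+\bigr\rangle, \qquad R(s) := (X+s)^{-1}.
\end{equation*}
The resolvent $R(s)$ extends meromorphically from $\{\Re(s) > 0\}$ to the whole complex plane as a family of operators between suitable anisotropic Sobolev spaces. The wavefront set of its Schwartz kernel is controlled by the duals of the weak stable and unstable bundles, and is transverse to $\WF(\tau^\pm) \subset N^*Z^\pm$ because the tangent spaces to $Z^\pm$ are never contained in $\R X \oplus E^s$ or $\R X \oplus E^u$; this transversality reflects the fact that $\pl\Sigma$ is nowhere tangent to the horocyclic foliations of the doubled surface. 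The pairing is therefore well defined and inherits the meromorphic continuation in $s$.

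For the vanishing at $s = 0$, I would expand $R(s) = s^{-1}\Pi_0 + R_0 + O(s)$ near the origin, where $\Pi_0$ is the projector onto the one-dimensional eigenspace of $X$ at $0$ (the Liouville eigenline). The residue $\langle \tau^-, \Pi_0 \tau^+\rangle$ should vanish by a symmetry argument based on $\sigma$, which swaps $\tau^+$ and $\tau^-$ while preserving the Liouville measure. The remaining value $\langle \tau^-, R_0 \tau^+\rangle$ ought to reduce, after analysing the generalized eigenspace of $X$ at $0$, to a topological integral over the boundary circles, which vanishes because $\chi(\pl\Sigma) = 0$. The main obstacle is precisely this last step: it requires a careful understanding of the Jordan structure of $X$ at the origin on anisotropic spaces, together with a Stokes-type argument identifying the constant term with a manifestly vanishing boundary pairing, in the spirit of the Dyatlov--Zworski formula for Ruelle zeta at zero but adapted to this boundary setting.
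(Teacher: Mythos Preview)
Your doubling strategy has two genuine gaps that make the argument, as written, not go through. First, doubling across a totally geodesic boundary produces a metric that is in general only $C^2$, not $C^\infty$: total geodesy forces the first normal derivative of the metric to vanish on $\partial\Sigma$, but higher odd normal derivatives (equivalently, the normal derivative of the curvature) need not, so the reflected metric is typically not $C^3$. The microlocal resolvent theory of Faure--Sj\"ostrand and Dyatlov--Zworski that you invoke needs a smooth Anosov flow; with a merely $C^1$ geodesic vector field one obtains at best meromorphic continuation to a strip, not to all of $\C$. Second, and more seriously, the claimed bijection between orthogeodesics of $\Sigma$ and flow arcs from $Z^+$ to $Z^-$ in the double is false. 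If $\gamma$ is an orthogeodesic of length $\ell$ from $p$ to $q$, then in the double the orbit through $(p,\nu)\in Z^+$ is a \emph{closed} geodesic of period $2\ell$ (it runs through $\gamma$ in copy~1, then through $\sigma(\bar\gamma)$ in copy~2, and returns), so it meets $Z^-$ at every time $(2k+1)\ell$. Thus each orthogeodesic already contributes infinitely many arcs $Z^+\to Z^-$, and the pairing $\langle\tau^-,R(s)\tau^+\rangle$ is of the shape $\sum e^{-s\ell}/(1-e^{-2s\ell})$ rather than $\eta(s)$; in particular it has an honest pole at $s=0$ and cannot be used to read off $\eta(0)=0$. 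There may also be further closed geodesics of the double hitting $\partial\Sigma$ perpendicularly whose half-period is not an orthogeodesic length at all. Your residue and constant-term heuristics (rank-one $\Pi_0$, symmetry under $\sigma$, reduction to $\chi(\partial\Sigma)$) are therefore being applied to the wrong object.

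The paper sidesteps both obstacles by a different construction: it extends $(\Sigma,g)$ smoothly to a slightly larger negatively curved surface $\Sigma_\delta$ with \emph{strictly convex} boundary and works with the Dyatlov--Guillarmou resolvent for open hyperbolic systems on $S\Sigma_\delta$. In that setting a geodesic leaving $\Sigma$ through $\partial\Sigma$ escapes through $\partial\Sigma_\delta$ and never returns, so arcs from the inward-normal set $\bar\Lambda$ to the outward-normal set $\Lambda$ are exactly the orthogeodesics, with no overcounting; one then has the honest identity $\eta(s)=-\langle[\Lambda],\,Q(s)\iota_X[\bar\Lambda]\rangle$. The vanishing at $s=0$ is obtained not from a symmetry of a rank-one projector but from the finite-rank structure of the open-system spectral projector together with an explicit Stokes-type homotopy of $\Lambda$ to a curve disjoint from the support of the relevant resonant states.
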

If $x \neq y \in \Sigma$, we may also consider the Poincar\'e series associated to the geodesic arcs joining $x$ to $y$. Namely, we set for $\Re(s)$ large enough
$$
\eta_{x,y}(s) = \sum_{\gamma : x \rightsquigarrow y} {e}^{-s\ell(\gamma)},
$$
where the sum runs over all geodesic arcs $\gamma : [0, \ell(\gamma)] \to \Sigma$ (parametrized by arc length) such that $\gamma(0) = x$ and $\gamma(\ell) = y$. {Then we have the following result.}

\begin{theo}\label{thm:points}
The Poincar\'e series $s \mapsto \eta_{x,y}(s)$ extends meromorphically to the whole complex plane and 
$$
\eta_{x,y}(0) = \frac{1}{\chi(\Sigma)},
$$
where $\chi(\Sigma)$ is the Euler characteristic of $\Sigma$.
\end{theo}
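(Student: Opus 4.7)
The natural strategy, parallel to that of Theorem \ref{thm:main}, is a doubling argument. I would let $M := \Sigma \cup_{\partial \Sigma} \Sigma$ be the closed surface obtained by doubling $\Sigma$ across its totally geodesic boundary, equipped with the reflected metric; the canonical swap involution $\sigma : M \to M$ is then an orientation-reversing isometry fixing $\partial \Sigma$ pointwise, and $M$ is itself closed and negatively curved. Because $\partial \Sigma$ is a disjoint union of circles, inclusion-exclusion yields
$$
\chi(M) \;=\; 2\chi(\Sigma) - \chi(\partial \Sigma) \;=\; 2 \chi(\Sigma).
$$

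For $x, y$ in the interior of $\Sigma$, unfolding successive reflections off $\partial \Sigma$ provides a length-preserving bijection between geodesic arcs in $\Sigma$ from $x$ to $y$ and geodesic arcs in $M$ from $x$ to $y$ (even number of reflections) or from $x$ to $\sigma(y)$ (odd number of reflections). This gives the identity
$$
\eta_{x,y}(s) \;=\; \eta^M_{x,y}(s) + \eta^M_{x, \sigma(y)}(s),
$$
so it is enough to establish the corresponding closed-surface statement: for any two distinct points $p, q$ of a closed negatively curved surface $M$, the series $\eta^M_{p,q}$ extends meromorphically to $\C$ with $\eta^M_{p,q}(0) = 1/\chi(M)$. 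Granting this, the theorem follows from the Euler characteristic computation above, since $\eta_{x,y}(0) = 2/\chi(M) = 1/\chi(\Sigma)$.

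The closed-surface statement is in turn attacked with the same Pollicott--Ruelle machinery used in the proof of Theorem \ref{thm:main}. Writing $\pi : SM \to M$ for the footpoint projection and $R(s) = \int_0^\infty \e^{-st} (\phi_{-t})^* \, \dd t$ for the resolvent of the geodesic flow $\phi_t$ on the unit tangent bundle $SM$, one expresses, for $\Re(s)$ large enough,
$$
\eta^M_{p,q}(s) \;=\; \langle R(s) [\pi^{-1}(p)], [\pi^{-1}(q)] \rangle,
$$
where $[\pi^{-1}(\cdot)]$ denotes the current of integration along the corresponding fiber. Meromorphic continuation is immediate from that of $R(s)$, so the heart of the argument is the evaluation of the finite part at $s=0$. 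I expect this to be the only substantive difficulty: one must identify the generalized $0$-resonant states of the Anosov generator, recognize the limit pairing as a topological intersection in $H^*(SM; \R)$, and use the Gauss--Bonnet theorem applied to the circle bundle $SM \to M$ to pin the normalization constant down to $1/\chi(M)$. Once this last step is in hand, combining it with the bijection and the identity $\chi(M) = 2\chi(\Sigma)$ yields Theorem \ref{thm:points}.
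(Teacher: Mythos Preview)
Your doubling argument contains a genuine gap: the claimed bijection is incorrect. Unfolding reflections does set up a length-preserving bijection between geodesic arcs in the double $M$ (from $x$ to $y$ or to $\sigma(y)$) and \emph{billiard trajectories} in $\Sigma$ from $x$ to $y$ --- piecewise-geodesic paths that reflect specularly at $\partial\Sigma$. But $\eta_{x,y}$ counts genuine geodesic arcs $\gamma:[0,\ell]\to\Sigma$, i.e.\ smooth solutions of the geodesic equation remaining in $\Sigma$; since $\partial\Sigma$ is totally geodesic, a geodesic meeting $\partial\Sigma$ transversally simply exits $\Sigma$ (into $\Sigma_\delta\setminus\Sigma$) rather than bouncing back. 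Thus the arcs counted by $\eta_{x,y}$ are exactly the billiard trajectories with \emph{zero} reflections, and your identity $\eta_{x,y}(s)=\eta^M_{x,y}(s)+\eta^M_{x,\sigma(y)}(s)$ already fails as an equality of Dirichlet series for $\Re(s)\gg 1$: the right-hand side contains, for instance, every geodesic in $M$ from $x$ to $y$ crossing $\partial\Sigma$ an even positive number of times, none of which has a counterpart on the left. That the numerics happen to match at $s=0$ is an artefact of the target value, not evidence that the reduction is valid; in particular even the meromorphic continuation of $\eta_{x,y}$ does not follow. A secondary obstacle is regularity: the doubled metric is in general only $C^1$ across $\partial\Sigma$ (total geodesy forces $\partial_r f(0,\cdot)=0$ in Fermi coordinates $\dd r^2 + f(r,t)^2\,\dd t^2$, but says nothing about higher odd normal derivatives), so outside constant curvature the resolvent machinery on $SM$ is not directly available.

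The paper does not double. It works on the open system directly: $\Sigma$ is thickened to a strictly convex $\Sigma_\delta$, the Dyatlov--Guillarmou resolvent $Q(s)$ furnishes the meromorphic continuation via the pairing $-\eta_{x,y}(s)=\langle[\Lambda_x],\,Q(s)\iota_X[\Lambda_y]\rangle$, and the value at $0$ comes from a cohomological identity of Dang--Rivi\`ere (Lemma~\ref{lem:dang}) expressing $[\Lambda_x]=\chi(\Sigma)^{-1}[\bar\Lambda]-\dd[S]$ with $\int [S]\wedge[\Lambda_y]=\chi(\Sigma)^{-1}$, combined with the vanishing of the boundary contribution $[\bar\Lambda]$ established in \S\ref{subsec:behaviors=0}.
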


{To the best of our knowledge, Theorem \ref{thm:main} is the first result on a series involving the orthospectrum (that is, the set of lengths of orthogeodesics) of a surface with totally geodesic boundary which has \textit{variable} negative curvature.} For hyperbolic surfaces (i.e. surfaces with \textit{constant} curvature $-1$) with totally geodesic boundary, the orthospectrum has been studied by many authors, among others Basmajian \cite{basmajian1993orthogonal}, Bridgeman \cite{bridgeman2011orthospectra}, Calegari \cite{calegari2010chimneys} (see also Bridgeman--Kahn \cite{bridgeman2010hyperbolic}). 
In particular they show that if $(\Sigma, g)$ is a compact hyperbolic surface with totally geodesic boundary, one has
$$
\ell(\partial \Sigma) = \sum_{\gamma \in \mathcal{G}^\perp} 2 \log \coth(\ell(\gamma) / 2), \quad \mathrm{vol}(\Sigma) = \frac{2}{\pi} \sum_{\gamma \in \mathcal{G}^\perp} \mathcal{R}\left(\mathrm{sech}^2(\ell(\gamma) / 2)\right),
$$
where $\ell(\partial \Sigma)$ is the length of the boundary of $\Sigma$, $\vol(\Sigma)$ is the area of $\Sigma$ and $\mathcal{R}$ is the Rogers dilogarithm function. We refer to \cite{bridgeman2016identities} for a detailed exposition of those results.

{In order to study the Poincar\'e series $\eta(s)$ and $\eta_{x,y}(s)$, we will adopt the elegant approach of Nguyen Viet Dang and Gabriel Rivi\`ere \cite{dang2020poincar}, which consists in interpreting both series as distributional pairings involving the resolvent of the geodesic flow. On a \textit{closed} surface with negative curvature, Dang and Rivi\`ere proved that Poincar\'e series associated to orthogeodesic arcs joining any two homologically trivial closed geodesics, as well as Poincar\'e series associated to geodesic arcs linking two points, admit a meromorphic extension to the whole complex plane; moreover they computed their values at zero --- for the series associated to geodesic arcs linking two points, they found (as here) that this value coincides with the inverse of the Euler characteristic of the surface. The main novelty of this article is that we work in the open case, which leads us to use the theory of Pollicott--Ruelle resonances for open systems developed by Dyatlov and Guillarmou \cite{dyatlov2016pollicott}, as well as a result of Hadfield \cite{hadfield2018zeta} about the topology of resonant states for surfaces with boundary. We refer to \cite{dang2020poincar} for precise references about Poincar\'e series counting geodesic arcs.
}

\subsection*{Acknowledgements} I thank Nguyen Viet Dang, Colin Guillarmou and Gabriel Rivi\`ere for fruitful discussions. I also thank the anonymous referees for interesting suggestions that helped improve the quality of this manuscript. This project has received funding from the European Research Council (ERC) under the European Unions Horizon 2020 research and innovation programme (grant agreement No. 725967).

\section{Geometrical and dynamical preliminaries}

We introduce in this section the main tools that will help us to understand $\eta$ and $\eta_{x,y}$.

\subsection{Extension to a surface with strictly convex boundary}

We extend $(\Sigma,g)$ into a slightly larger negatively curved surface with boundary $(\Sigma',g')$. We take $\delta > 0$ small and we set
$$
\Sigma_\delta = \{x \in \Sigma' {~:~}\mathrm{dist}_{g'}(x, \Sigma) < \delta\}.
$$
Then since $\partial \Sigma$ is totally geodesic and $(\Sigma', g')$ is negatively curved, it follows that $\Sigma_\delta$ has strictly convex boundary, in the sense that the second fundamental form of $\partial \Sigma_\delta$ with respect to the outward normal vector field is positive ({see for example \cite[Lemma 2.4]{chaubet2021closed}}). We denote by
$$
M_\delta = S\Sigma_\delta = \{(x, v) \in T\Sigma_\delta {~:~}\|v\|_g = 1\}
$$
the unit tangent bundle of the surface $\Sigma_\delta$, and {by} $\pi : M_\delta \to \Sigma_\delta$ the natural projection.

\subsection{Structural forms}\label{subsec:structuralforms}
We recall here some classical facts from \cite[\S7.2]{singer1976lecture} about geometry of surfaces. We have the Liouville one-form $\alpha$ on $M_\delta$ defined by
$$
\langle \alpha(z), w \rangle = \langle \dd_{(x,v)} \pi(w), v \rangle, \quad {z = } (x,v) \in M_\delta, \quad w \in T_{(x,v)}M_\delta. 
$$
Then $\alpha$ is a contact form (that is, $\alpha \wedge \dd \alpha$ is a volume form on $M_\delta$) and it turns out that the geodesic vector field $X$ is the Reeb vector field associated to $\alpha$, that is, it satisfies
$$
\iota_X \alpha = 1, \quad \iota_X \dd \alpha = 0,
$$ 
where $\iota$ denote the interior product. We  set $\beta = R_{\pi/2}^*\alpha$ where for $\theta \in \R$, we denote by $R_{\theta} : M_\delta \to M_\delta$ the rotation of angle $\theta$ in the fibers {(which is defined thanks to the orientation of $\Sigma$)}. Then the volume form $\vol_g$ of $\Sigma_\delta$ satisfies \cite[p. 166]{singer1976lecture}
\begin{equation}\label{eq:volume}
\pi^*\vol_g = \alpha \wedge \beta.
\end{equation}
We denote by $\psi$ the connection one-form (see \cite[Theorem p.169]{singer1976lecture}), that is, the unique one-form on $M_\delta$ satisfying
\begin{equation}\label{eq:structural}
\iota_V \psi = 1, \quad \dd \alpha = \psi \wedge \beta, \quad \dd \beta = \alpha \wedge \psi, \quad \dd \psi = -(\kappa \circ \pi) \alpha \wedge \beta,
\end{equation}
where $V$ is the vector field generating $(R_\theta)_{\theta \in \R}$ and $\kappa$ is the Gauss curvature of $\Sigma$. Then $(\alpha, \beta, \psi)$ is a global frame of $T^*M_\delta$. We denote {by} $H$ the vector field on $M_\delta$ such that $(X, H, V)$ is the dual frame of $(\alpha, \beta, \psi)$. We then have the following commutation relations \cite[p. 170]{singer1976lecture}
\begin{equation}\label{eq:commutation}
[V,X] = H, \quad [V,H] = -X, \quad [X, H] = (\kappa \circ \pi) V.
\end{equation}
The orientation of $M_\delta$ will be chosen so that $(X, H, V)$ is positively oriented. {On $\partial M$, we have a precise description $(X, H, V)$, as follows (see \cite[Lemma 2.2]{chaubet2021closed}).
\begin{lemm}\label{lem:coord}
Let $\gamma_\star$ be a connected component of $\partial \Sigma$ (which is the image of a closed geodesic) and denote by $\ell_\star > 0$ its length. Then there is a tubular neighborhood $U$ of $\pi^{-1}(\gamma_\star)$ and coordinates $(\rho, \tau, \theta)$ on $U$ with
$$
U \simeq (-\delta, \delta)_\rho \times (\R/\ell_\star \Z)_\tau \times (\R/2 \pi\Z)_\theta,
$$
and such that
$$
|\rho(z)| = \mathrm{dist}(z, \gamma_\star), \quad S_z \Sigma = \{(\rho(z), \tau(z), \theta)~:~\theta \in \R/2\pi\Z\}, \quad z \in U.
$$
Moreover in these coordinates we have, on $\{\rho = 0\} \simeq \gamma_\star$,
$$
X(z) = \cos(\theta) \partial_\tau + \sin(\theta) \partial_\rho, \quad H = -\sin(\theta)\partial_\tau + \cos(\theta) \partial_\rho, \quad V = \partial_\theta.
$$
\end{lemm}
}

\subsection{Extension of the geodesic vector field}
{We embed $M_\delta$ into a compact manifold without boundary $N$ (for example by taking the doubling manifold); then by \cite[{Lemma 2.1}]{dyatlov2016pollicott}, we may extend the geodesic vector field $X$ to a vector field on $N$} so that $M_\delta$ is convex {with respect to $X$} in the sense that for any $T \geqslant 0$, 
\begin{equation}\label{eq:convex}
x, \varphi_T(x) \in M_\delta \quad  \implies \quad \forall t \in [0,T], \quad \varphi_t(x) \in M_\delta,
\end{equation}
{where $\varphi_t$ is the flow induced by $X$}.
Let $\rho_\delta \in C^\infty(M_\delta, [0,1])$ be a boundary defining function for $M_\delta$, that is, $\rho_\delta > 0$ on $M_\delta \setminus \partial M_\delta$, $\rho_\delta = 0$ on $\partial M_\delta$ and $\dd \rho_\delta \neq 0$ on $\partial M_\delta$ (for example we can take $\rho_\delta(x,v) = \mathrm{dist}(x, \partial\Sigma_\delta)$). Then the strict convexity of $\partial \Sigma_\delta$ implies that $\partial M_\delta$ is strictly convex in the sense that for every $x \in \partial M_\delta$ one has 
\begin{equation}\label{eq:strictconvex}
X \rho_\delta(x) = 0 \quad \implies \quad X^2\rho_\delta(x) < 0
\end{equation}
(see \cite[\S6.3]{dyatlov2016pollicott}).

\subsection{Hyperbolicity of the geodesic flow}
We define
$$
\Gamma_\pm = \{z \in M_\delta {~:~} \varphi_{\mp t}(z) \in M_\delta,~t \geqslant 0\}, \quad K = \Gamma_+\cap\Gamma_- \subset M.
$$
By \cite[\S3.9 and Theorem 3.2.17]{klingenberg2011riemannian} the geodesic flow $(\varphi_t)$ is hyperbolic on $K$, that is, for every $z \in K$ there is a decomposition
$$
T_z M_\delta = E_s(z) \oplus E_u(z) \oplus \R X(z)
$$
depending continuously on $z$, which is invariant by $\dd \varphi_t$ and such that, for some $C, \nu > 0$,
$$
\left\|\dd \varphi_t(z) w\right\| \leqslant C {e}^{-\nu t} \|w\| , \quad w \in E_s(z), \quad t \geqslant 0,
$$
and 
$$
\left\|\dd \varphi_{-t}(z) w\right\| \leqslant C {e}^{-\nu t} \|w\|, \quad w \in E_u(z), \quad t \geqslant 0.
$$
Moreover, by \cite[Lemma 2.10]{dyatlov2016pollicott}, there are two vector subbundles $E_{\pm} \subset T_{\Gamma_\pm} M_\delta$ {(here $T_{\Gamma_\pm} M_\delta = TM_\delta|_{\Gamma_\pm}$)} with the following properties:
\begin{enumerate}
\item $E_+|_K = E_u$ and $E_-|K = E_s$ and $E_\pm(z)$ depends continuously on $z \in \Gamma_\pm$;

\item \label{it:2} $\langle \alpha, E_\pm \rangle  = 0$;

\item\label{it:3} For some constants $C',\nu' > 0$ we have
$$
\|\dd \varphi_{\mp t} (z)w\| \leqslant C' {e}^{-\nu' t}\|w\|, \quad w \in E_\pm(z), \quad z \in {\Gamma_\pm}, \quad t \geqslant 0;
$$
\item If $z \in \Gamma_\pm$ and $w \in T_z{M_\delta}$ satisfy $w \notin {\R X(z) \oplus }E_\pm(z)$, then as $t \to \mp \infty$
$$
\left\|\dd \varphi_{t}(z)w \right\| \rightarrow \infty,\quad \frac{\dd \varphi_t(z)w}{\left\|\dd \varphi_t(z)w\right\|} \rightarrow E_{\mp}|_K.
$$
\end{enumerate}
Moreover, we have the following description of $E_\pm.$ 
{
\begin{lemm}\label{lem:rpm}
There are continuous functions $r_\pm : \Gamma_\pm \to \R$ such that $\pm r_\pm > 0$ on $\Gamma_\pm$ and 
\begin{equation}\label{eq:e_pm}
E_\pm(z) = \mathbb{R}\left(H(z) + r_\pm(z) V(z)\right), \quad z \in \Gamma_\pm.
\end{equation}
\end{lemm}
}
\begin{proof}
{As the contact form $\alpha$ is preserved by the flow $\varphi_t$, we get by property \eqref{it:2} above $E_\pm(z) \subset \ker \alpha(z) = \R(H(z) \oplus V(z))$. In a first step, we will assume that $E_\pm(z) \cap \R V(z) = \{0\}$ for every $z \in \Gamma_\pm$ (we shall prove it later). Since the bundles $E_\pm$ are continuous, we deduce that there are two continuous functions $r_\pm : \Gamma_\pm \to \R$ such that \eqref{eq:e_pm} holds. 
}

{
Next, we show that $\pm r_\pm > 0$. The fact that $\dd \varphi_{\mp t}(z) E_\pm(z) \subset E_\pm(z)$ for $t \geqslant 0$ implies that the map $t \mapsto r_\pm(\varphi_{\mp t}(z))$ is smooth on $\R_+$ for any $z \in \Gamma_\pm$ (since $\R(H \oplus V)$ is preserved by $\dd \varphi_t$). We may thus compute, on $\Gamma_\pm$,
\begin{align}\label{eq:commutation2}
[X, H + r_\pm V] &= [X, H] + (X r_\pm) V + r_\pm [X, V] = \left(\kappa \circ \pi + X r_\pm\right) V - r_\pm H,
\end{align}
where we used the commutation relations \eqref{eq:commutation}.
As $E_\pm$ is preserved by the flow, we must have $[X, H+r_\pm V] \in E_\pm$; thus combining \eqref{eq:commutation2} and \eqref{eq:e_pm} we obtain the following Riccati equation:
\begin{equation}\label{eq:riccati}
X r_\pm + r_\pm^2 + \kappa \circ \pi = 0 \quad \text{ on } \Gamma_\pm.
\end{equation}
We now prove that $r_+ > 0$. Let $z \in \Gamma_+$ and set $U(t) = (H + r_+ V)(\varphi_{-t}(z)) \in E_+(\varphi_{-t}(z))$ and $r_+(t) = r_+(\varphi_{-t}(z))$ for $t \geqslant 0$. By \eqref{eq:commutation2} and \eqref{eq:riccati} we have $[-X, U(t)] = r_+(t) U(t)$ and thus
\begin{equation}\label{eq:contraction}
\dd \varphi_{-t}(z) U(0) = \exp \left(- \int_{0}^t r_+(u) \dd u\right) U(t), \quad t \geqslant 0.
\end{equation}
On the other hand, equation \eqref{eq:riccati} implies that for any $t \geqslant 0$ we have the implication
$$
r_+(t) = 0 \quad \implies \quad r_+'(t) = -X r_+(\varphi_{-t}(z)) < 0
$$
since $\kappa < 0$ everywhere. Therefore, if $r_+(t) \leqslant 0$ for some $t$, then $r_+(u) \leqslant 0$ for all $u \geqslant t.$ This is not possible by \eqref{eq:contraction} since $\dd \varphi_{-t}(z)U(0) \to 0$ as $t \to +\infty$. We therefore proved that $r_+(t) > 0$ for all $t \geqslant 0$.  Thus $r_+ > 0$ on $\Gamma_+$ and similarly, one can show that $r_-(z) < 0$ for all $z \in \Gamma_-.$}

{
It remains to prove that $E_\pm(z) \cap \R V(z) = \{0\}$ for any $z \in \Gamma_\pm$. Let $z \in \Gamma_+$, and write $V(t) = V(\varphi_{-t}(z))$ and $H(t) = H(\varphi_{-t}(z))$ for $t \geqslant 0$. Then there are smooth functions 
$a,b : [0, \infty[ \to \R$ such that for any $t \geqslant 0$ one has
$$
\dd \varphi_{-t}(z)V(z) = a(t) H(t) + b(t) V(t)
$$
The commutation relations \eqref{eq:commutation} imply that
$$
a'(t) + b(t) = 0, \quad  \kappa(t) a(t) + b'(t) = 0, \quad t \geqslant 0,
$$
where $\kappa(t) = (\kappa \circ \pi \circ \varphi_{-t})(z)$. Thus $a''(t) + \kappa(t)a(t) = 0$; moreover we have $a(0) = 0$ and $a'(0) = -b(0) = -1$; from this it is easy to deduce that $a'(t) < 0$ for every $t \geqslant 0$. In particular there are $C_1, C_2 > 0$ such that $a(t) \leqslant -C_1$ for every $t \geqslant C_2$ and thus for some $C > 0$ we have $\|\dd \varphi_{-t}(z)V(z)\| \geqslant C$ for any $t \geqslant 0$. As a consequence, we obtain that $V(z) \notin E_+(z).$ Similarly, one can prove that $V(z) \notin E_-(z)$ for any $z \in \Gamma_-$. This concludes the proof of the lemma.
}
\end{proof}

{
\begin{rem}\label{rem:neq}
Looking carefully at the proof of Lemma \ref{lem:rpm}, we see that for any $z$ and $t$ such that $\varphi_t(z) \in M_\delta$ we have
\begin{equation}\label{eq:positive}
\pm \langle \varphi_{t}^*\beta(z), V(z)\rangle > 0 \quad \text{ and } \quad \pm \langle \varphi_t^*\psi(z), H(z) \rangle > 0
\end{equation}
whenever $\pm t > 0$. Indeed, the first part of \eqref{eq:positive} follows from the fact that, with the notations of the proof of Lemma \ref{lem:rpm}, one has $a(t) = \langle \varphi_{-t}^*\beta(z), V(z) \rangle < 0$ for $t > 0$ (since $a'(t) < 0$ and $a(0) = 0$), and reversing the time we get that $a(t) > 0$ whenever $t < 0$. The second part of \eqref{eq:positive} was not explicitly proven but the proof is very similar.
\end{rem}
}

\subsection{The resolvent}\label{subsec:resolvent}
For $\Re(s)$ large enough, consider the operator $R(s)$ defined on $\Omega^\bullet(N)$ by
\begin{equation}\label{eq:formularesolv}
R(s) = \int_{0}^{+\infty} {e}^{-ts} \varphi_{-t}^* \dd t.
\end{equation}
Here $\Omega^\bullet(N)$ denotes the space of smooth differential forms on $N$. Then it holds 
$$
(\Lie_X +s) R(s) = \id_{\Omega^\bullet(N)} = R(s) (\Lie_X + s).
$$
Let $\chi \in C^\infty_c(M_\delta \setminus \partial M_\delta)$ such that $\chi \equiv 1$ on {$M_{\delta/2}$}, and let 
$$Q(s) = \chi R(s) \chi.$$
Then it follows from \cite[Theorem 1]{dyatlov2016pollicott} that the family of operators $s \mapsto \chi R(s) \chi$ extends to a family of operators
$$
Q(s) : \Omega_c^\bullet(M_\delta^\circ) \to \mathcal{D}'^\bullet(M_\delta^\circ)
$$
meromorphic in $s \in \C$, which satisfies, for $w \in \Omega^\bullet_c(M^\circ_\delta)$ supported in $\{\chi = 1\}$,
\begin{equation}\label{eq:inverse}
(\Lie_X + s)Q(s)w = w \quad \text{ on } \quad \{\chi = 1\},
\end{equation}
for any $s \in \C$ which is not a pole of $s \mapsto Q(s)$. Here, $M_\delta^\circ$ denotes the interior of $M_\delta$ and if $U$ is a manifold, $\Omega^\bullet_c(U)$ denotes the space of compactly supported differential forms on $U$ while $\mathcal{D}'^\bullet(U)$ denote its dual space, that is, the space of currents.
{
In what follows, for any distribution $A \in \mathcal{D}'(T^*M_\delta \times T^*M_\delta)$, we will set
$$
\WF'(A) = \{(z, \xi, z', \xi') \in T^*(M_\delta \times M_\delta) {~:~}(z, \xi, z', -\xi') \in \WF(A)\},
$$
where $\WF$ is the H\"ormander wavefront set, see \cite[\S8]{hor1}.}  The microlocal structure of $Q(s)$ is given by ({see \cite[Lemma 4.5]{dyatlov2016pollicott}}, in what follows we identify $Q(s)$ and its Schwartz kernel)
\begin{equation}\label{eq:wf}
\WF'(Q(s)) \subset \Delta(T^*M_\delta) \cup \Upsilon_+ \cup (E_+^* \times E_-^*)
\end{equation}
where $\Delta(T^*M_\delta) = \{(\xi, \xi) {~:~}\xi \in T^*M_\delta \} \subset T^*(M_\delta \times M_\delta)$ and 
$$
\Upsilon_+ = \{(\Phi_t(z, \xi), (z,\xi)) {~:~}t \geqslant 0,~\langle \xi, X(z) \rangle = 0,~z \in M_\delta,~\varphi_t(z) \in M_\delta\}.
$$
Here $\Phi_t$ denotes the symplectic lift of $\varphi_t$ on $T^*M_\delta$, that is 
$$
\Phi_t(z, \xi) = (\varphi_t(z), (\dd_z\varphi_t)^{-\top} \xi), \quad (z, \xi) \in T^*M_\delta, \quad \varphi_t(z) \in M_\delta,
$$ 
and the subbundles $E_\pm^* \subset T^*_{\Gamma_\pm}M_\delta$ are defined by 
$E_\pm^*(\R X(z) \oplus E_\pm) = 0.$ 
In particular, we have
$$
\|\Phi_{\mp t}(z, \xi)\| \to + \infty, \quad (z, \xi) \in E_\pm^*, \quad t \to +\infty
$$
and 
$$
E_\pm^*(z) =\R\left( r_\pm(z) \beta(z) - \psi(z)\right), \quad z \in \Gamma_\pm.
$$

%% Poincar\'e series
\section{Poincar\'e series}

In this section, we give a description of the Poincar\'e series $\eta(s)$ in terms of a pairing involving the operator $Q(s).$

% Counting measure
\subsection{Counting measure}
{Let} $\Lambda, \bar \Lambda \subset M_\delta$ be the one-dimensional submanifolds of $M_\delta$ defined by
$$
\Lambda = \{(x,\nu(x)) {~:~}x \in \partial \Sigma\}, \quad \bar \Lambda = \{(x, -\nu(x)) {~:~}x \in \partial \Sigma\}.
$$
where $\nu : \partial \Sigma \to M$ is the outward normal pointing vector to $\partial \Sigma$. {Those manifolds are oriented according to the orientation of $\partial \Sigma$ {which is itself oriented by $\partial_\tau$ in the coordinates of Lemma \ref{lem:coord}}; note also that in the coordinates given by Lemma \ref{lem:coord} we have (here $\gamma_\star$ is a connected component of $\partial \Sigma$)
\begin{equation}\label{eq:lambdacoord}
\Lambda|_{\gamma_\star} = \{(0, \tau, \pi / 2)~:~ \tau \in \R/\ell_\star \Z\}, \quad \bar \Lambda|_{\gamma_\star} = \{(0, \tau, -\pi / 2)~:~ \tau \in \R/\ell_\star \Z\};
\end{equation}
in particular it holds
\begin{equation}\label{eq:tangent}
T_z \Lambda = \R H(z), \quad T_{z'} \bar \Lambda = \R H(z'), \quad (z, z') \in \Lambda \times \bar \Lambda.
\end{equation}
}
For $\tau \geqslant 0$ {and $z \in \Lambda$} such that $\varphi_{-\tau}(z) \in \bar \Lambda,$ we will set
$
\varepsilon(\tau,z) = 1
$
if 
\begin{equation}\label{eq:directsum}
T_z \Lambda \oplus \R X(z) \oplus \dd_{\varphi_{-{\tau}}(z)} \varphi_{{\tau}}\left(T_{\varphi_{-{\tau}}({z})}\bar \Lambda\right)
\end{equation}
has the same orientation {as} $TM_\delta$, and $\varepsilon(\tau, z) = -1$ otherwise {(note that the sum \eqref{eq:directsum} is always direct as the component of $\dd \varphi_\tau(z') H(z')$ on $V(\varphi_\tau(z'))$ is positive by Remark \ref{rem:neq}, since $\varphi_\tau(\bar \Lambda) \cap \Lambda \neq \emptyset$ implies $\tau > 0$).}
{
\begin{lemm}For any $\tau \geqslant 0$ and $z \in \Lambda$ such that $\varphi_{-\tau}(z) \in \bar \Lambda$ it holds
$$
\varepsilon(\tau, z) = 1.
$$
\end{lemm}
}

\begin{proof}
Indeed, $\bar \Lambda$ is oriented so that {$\det_{T_{z'} \bar \Lambda}H(z') > 0$} for $z' \in \bar \Lambda$; moreover the component of $\dd \varphi_{t}(z)H(z)$ on $V(\varphi_t(z))$ is positive (meaning that $\langle \psi(\varphi_t(z)), \dd_{z} \varphi_{t}H(z)\rangle > 0$) whenever $t > 0$ (see Remark \ref{rem:neq}). Thus, since $T_z\Lambda$ is oriented so that ${\det_{T_z\Lambda}H(z) < 0}$ we obtain  $\varepsilon(\tau, z)$ is equal to the sign of $\det_{T_{z}M_\delta}(-H, X, H + f(z, \tau)V)$ for some $f(z, \tau) > 0$, which is $1$ as $(X, H, V)$ is positively oriented.
\end{proof}

{
In what follows, if $P$ is an embedded, oriented, compact, $k$-dimensional submanifold of $N$, we will denote by $[P] \in \mathcal{D}'^{n-k}(N)$ the associated integration current, which is defined by
$$
\int_N [P]\wedge \omega = \int_P {\iota_P}^*\omega, \quad \omega \in \Omega^k(N),
$$
where $\iota_P : P \hookrightarrow N$ is the inclusion.
} We then have the following geometrical lemma, which is a {direct} adaptation of \cite[Lemma 4.11]{dang2020poincar} in our context.

\begin{lemm}\label{lem:distribution}
The expression
$$
\mu(t) = \sum_{\substack{\tau \geqslant 0 \\ \Lambda \cap \varphi_\tau(\bar \Lambda) \neq \emptyset}} \left(\sum_{z \in \Lambda \cap \varphi_\tau(\bar \Lambda)} \varepsilon(\tau, z)\right)
\delta(t-\tau),$$
makes sense and defines a distribution $\mu \in \mathcal{D}'(\R_{>0})$. Moreover, it coincides with 
$$
t \mapsto - \int_{N} [\Lambda] \wedge \left(\iota_X \varphi_{-t}^*[\bar \Lambda]\right).
$$

\end{lemm}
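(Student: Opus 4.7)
The plan is to use the orbit map
$$
G : \bar\Lambda \times \R_{>0} \longrightarrow M_\delta, \qquad G(z, t) = \varphi_t(z),
$$
whose preimages under $\Lambda$ parametrize exactly the pairs $(z, \tau)$ appearing in the sum defining $\mu$. I would first show that $G$ is transverse to $\Lambda$. Since $\partial \Sigma$ is totally geodesic in $(\Sigma_\delta, g')$, the normal $\nu$ is parallel along $\partial \Sigma$, and therefore $T_z \bar\Lambda = \R H(z)$ for every $z \in \bar\Lambda$ and $T_{z'} \Lambda = \R H(z')$ for every $z' \in \Lambda$. Hence, at any $(z, \tau) \in G^{-1}(\Lambda)$, transversality amounts to
$$
\R X(z') \oplus \R H(z') \oplus \R\, \dd\varphi_\tau(H(z)) = T_{z'} M_\delta, \qquad z' = \varphi_\tau(z),
$$
i.e.\ that $\dd\varphi_\tau(H(z))$ has a nonzero $V$-component. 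This is a Jacobi-field condition which holds automatically because $(\Sigma_\delta, g')$ has no conjugate points; it can also be deduced from the Riccati equation $\pm X r_\pm + r_\pm^2 + \kappa \circ \pi = 0$ together with the fact that $H$ lies in neither the stable nor the unstable cone. Combined with the compactness of $\bar\Lambda$ and the convexity hypothesis \eqref{eq:convex}, this ensures that $G^{-1}(\Lambda) \cap (\bar\Lambda \times [\epsilon, T])$ is finite for every $0 < \epsilon < T$, so $\mu$ is a well-defined distribution on $\R_{>0}$.

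Next I would identify $\mu$ with the current-theoretic pairing. Parametrizing $\bar\Lambda$ by $s \mapsto \gamma(s)$, a direct local computation gives, for any $\omega \in \Omega^2(N)$ and $\phi \in C^\infty_c(\R_{>0})$,
$$
\bigl\langle G_*\bigl(\phi(t)\, [\bar\Lambda \times \R_{>0}]\bigr),\, \omega \bigr\rangle
= \int_0^\infty \int_{\bar\Lambda} \phi(t)\, \omega\bigl(\dd\varphi_t\gamma'(s),\, X\bigr)\, \dd s\, \dd t.
$$
On the other hand, using $\varphi_{-t}^*[\bar\Lambda] = [\varphi_t(\bar\Lambda)]$ and the sign rule $\langle \iota_X T, \omega\rangle = -\langle T, \iota_X \omega\rangle$ for a current $T$ of degree two on the three-manifold $N$, the same quantity equals $\int_0^\infty \phi(t)\, \langle \iota_X \varphi_{-t}^*[\bar\Lambda], \omega\rangle\, \dd t$. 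One thus obtains the identity of currents on $N$
$$
G_*\bigl(\phi(t)\, [\bar\Lambda \times \R_{>0}]\bigr) \;=\; \int_0^\infty \phi(t)\, \iota_X \varphi_{-t}^*[\bar\Lambda]\, \dd t.
$$
The projection formula, applicable thanks to the transversality of $G$ to $\Lambda$, now gives
$$
\int_N [\Lambda] \wedge G_*\bigl(\phi(t)\, [\bar\Lambda \times \R_{>0}]\bigr)
= \int_{\bar\Lambda \times \R_{>0}} \phi(t)\, G^*[\Lambda]
= \sum_{(z, \tau) \in G^{-1}(\Lambda)} \epsilon_{z,\tau}\, \phi(\tau),
$$
where $\epsilon_{z,\tau} \in \{\pm 1\}$ is the sign of the oriented intersection. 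A direct orientation check — reordering the three one-dimensional factors in the definition of $\varepsilon(\tau, z)$ produces a single sign flip — gives $\epsilon_{z,\tau} = -\varepsilon(\tau, z)$, which precisely cancels the minus sign in front of the integral in the statement.

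The main obstacle is that the product $[\Lambda] \wedge \iota_X \varphi_{-t}^*[\bar\Lambda]$ is not defined pointwise at a resonant time $t = \tau$, because the conormal wavefront sets $N^*\Lambda$ and $\Phi_\tau(N^*\bar\Lambda)$ meet above the intersection points. The correct reading is as a distribution on $\R_{>0}$, obtained by averaging in $t$ against a test function; it is precisely the pairing computation above that endows this distribution with meaning. The transversality established in the first step then ensures that the resulting object is a locally finite sum of delta functions, matching $\mu$.
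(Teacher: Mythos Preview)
Your proposal is correct and follows essentially the same route as the paper: both identify the tangent spaces $T_z\Lambda = \R H(z)$, $T_{z'}\bar\Lambda = \R H(z')$, and both reduce the transversality of the flowed $\bar\Lambda$ to $\Lambda$ to the fact that $\dd\varphi_\tau(H(z))$ has a nonzero $V$-component (absence of conjugate points). The only difference is packaging: the paper, having checked these hypotheses, invokes \cite[Lemma~4.11]{dang2020poincar} as a black box, whereas you unpack that lemma directly via the orbit map $G$, the pushforward identity, and the projection formula --- which is exactly the content of the cited result.
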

{\begin{rem}
\begin{enumerate}[label=(\roman*)]
\item Lemma \ref{lem:distribution} can be reformulated as follows. For any $\chi \in C^\infty_c(\R_+)$, the product 
$$A_\chi = [\Lambda]\wedge \int_{\R_+} \chi(t) \iota_X \varphi_{-t}^* [\bar \Lambda] \dd t$$
is well defined and $\langle 1, A_\chi \rangle = - \langle \mu, \chi\rangle$ (here the first pairing takes place on $N$ while the second one takes place on $\R_+$).
\item Lemma \ref{lem:distribution} is an elementary result coming from the theory of currents and is not specific to $(\Lambda, \bar \Lambda,\varphi_t).$ Indeed, this lemma will hold true for if we replace $\Lambda, \bar \Lambda$ and the flow $\varphi_t$ by arbitrary submanifolds $N_1, N_2$ and another flow $\psi_t$, whenever the sum \eqref{eq:directsum} is direct (replacing $(\Lambda, \bar \Lambda, \varphi_t)$ by $(N_1, N_2, \psi_t)$) and $\dim N_1 + \dim N_2 + 1 = \dim N$; we refer to \cite[Lemma 4.11]{dang2020poincar} for more details.
\end{enumerate}
\end{rem}}
\begin{proof}
We note that $\Lambda \cap \bar \Lambda = \emptyset$, and $X(z) \notin T_z\bar \Lambda$ for any $z \in \bar \Lambda$. Moreover, it holds $\dim(\Lambda) + \dim(\bar \Lambda) + 1 = \dim(N)$. Hence by \eqref{eq:directsum} we can apply \cite[Lemma 4.11]{dang2020poincar} to obtain the sought result (note however that here the vector field $X$ on $N$ may have singular points, but this is not a problem {since the proof of \cite[Lemma 4.11]{dang2020poincar} is local in nature and the} singular points are far away from $\Lambda$).
\end{proof}

% A pairing formula for the Poincar\'e series
\subsection{A pairing formula for the Poincar\'e series}\label{subsec:pairing}

Note that (\ref{eq:wf}) implies that $Q(s)\iota_X[\bar \Lambda]$ is well defined. {Indeed, according to \cite[Theorem 8.1.9]{hor1} one has}
$
\WF(\iota_X [\bar \Lambda]) \subset N^* \bar \Lambda
$
where
$$
N_z^*\bar \Lambda = \{\xi \in T^*_zM {~:~}\langle \xi, H(z) \rangle = 0\} = \R \alpha(z) \oplus \R \psi(z),
$$
where $\psi$ is the connection form. For $z \in \Lambda$ we have $T_z \Lambda = \R H(z)$ and thus
\begin{equation}\label{eq:n*lambda}
N^*_z \Lambda = \R\alpha(z) \oplus \R\psi(z),
\end{equation}
(of course the same formula holds if we replace $\Lambda$ by $\bar \Lambda$). We have $E_-^* \cap  N^*\bar \Lambda \subset \{0\} $, since for $z \in \Gamma_- \cap \bar \Lambda$ we have $E_-^*(z) = \R (r_-(z)\beta(z) - \psi(z))$, and $r_-(z) < 0$ by Lemma \ref{lem:rpm}.
By (\ref{eq:wf})
we can apply \cite[Theorem 8.2.13]{hor1} to see that $Q(s)\iota_X [\bar \Lambda]$ is well defined, and 
\begin{equation}\label{eq:wfqss}
\WF(Q(s) \iota_X[\bar \Lambda]) \subset E_+^* \cup (N^*\bar \Lambda) \cup \{\Phi_t(z,\xi) {~:~}z \in \bar \Lambda,~\xi \in \R \psi(z),~ t \geqslant 0\}.
\end{equation}
In particular, we have
$
 N^*\Lambda\cap \WF(Q(s) \iota_X [\bar \Lambda]) = \emptyset.
$
{
Indeed, since $r_+(z) > 0$, we have as before that $N^*\Lambda \cap E_+^* \subset \{0\}$; also $N^*\Lambda \cap N^*\bar \Lambda = \emptyset$ simply because $\Lambda \cap \bar \Lambda = \emptyset$; finally, the last term in the right-hand side of \eqref{eq:wfqss} can only intersect $N^*\Lambda$ in a trivial way by Remark \ref{rem:neq} and \eqref{eq:n*lambda}.} Therefore, the product $ [\Lambda] \wedge Q(s) \iota_X [\bar \Lambda]$ is well defined as a distribution {by \cite[Theorem 8.2.10]{hor1}}. As $s \mapsto Q(s)$ is meromorphic, so is the family $s \mapsto  [\Lambda] \wedge Q(s) \iota_X [\bar \Lambda]$, because the bound (\ref{eq:wf}) is satisfied locally uniformly in $s \in \C \setminus \mathrm{Res}(\Lie_X)$ (it follows from the proof of (\ref{eq:wf}) in \cite{dyatlov2016pollicott}).

In what follows, for any closed conical subset ${\Gamma} \subset T^*M_\delta$, we will denote 
$$
\mathcal{D}'^\bullet_{\Gamma}(M_\delta^\circ) = \{u \in \mathcal{D}'^\bullet(M_\delta^\circ) {~:~} \WF(u) \subset \Gamma\}
$$
endowed with its natural topology (see \cite[Definition 8.2.2]{hor1}).

\begin{prop}\label{prop:pairing}
If $\Re(s)$ is large enough, {the Poincar\'e series $\eta(s)$ converges, the pairing $\langle 1, [\Lambda] \wedge Q(s) \iota_X [\bar \Lambda] \rangle$ is well defined,} and it holds
$$
\eta(s) = -\langle 1, [\Lambda] \wedge Q(s) \iota_X [\bar \Lambda] \rangle.
$$
\end{prop}

{
\begin{rem}
As we mentioned above, we already know that the pairing $\langle 1, [\Lambda] \wedge Q(s) \iota_X [\bar \Lambda] \rangle$ makes sense by using the wavefront set properties of $Q(s)$ given in \cite{dyatlov2016pollicott}. However, we will prove below that this pairing is \textit{a priori} well defined provided that $\Re(s)$ is large enough (without using the results of \cite{dyatlov2016pollicott}) and we will see (using Lemma \ref{lem:distribution}) that this implies the convergence of the series $\eta(s)$.
\end{rem}
\begin{corr}
The function $s \mapsto \eta(s)$ extends meromorphically to the whole complex plane.
\end{corr}
\begin{proof}
We saw above that family $s \mapsto [\Lambda] \wedge Q(s) \iota_X [\bar \Lambda]$ extends meromorphically to the whole complex plane, and so does $s \mapsto \langle 1, [\Lambda] \wedge Q(s) \iota_X [\bar \Lambda] \rangle$. Thus Proposition \ref{prop:pairing} immediately implies the meromorphic continuation of $\eta$.
\end{proof}
}

\begin{proof}[Proof of Proposition \ref{prop:pairing}]
We fix $\varrho \in C^\infty(\R, [0,1])$ such that $\varrho(t) = 1$ for $t \geqslant \varepsilon$ and $\varrho(t) = 0$ for $t \leqslant \varepsilon/2$, where 
$
\varepsilon = \min(1/2,~  \inf_{\gamma \in \mathcal{G^\perp}}{\ell(\gamma)}).
$
For ${n} \in \N_{\geqslant 1}$ we take $\varrho_{{n}} \in C^\infty_c(\R_{>0}, [0,1])$ such that $\varrho_{{n}}(t) = 1$ for $t \leqslant n$ and $\varrho_{{n}}(t) = 0$ for $t \geqslant n +1$, and we set $\chi_n = \varrho_n\varrho $. Then we have
\begin{equation}\label{eq:pairinglimit}
\langle \mu, \chi_{{n}} {e}^{-s \cdot} \rangle \to \eta(s), \quad {n} \to +\infty,
\end{equation}
for $\Re(s) \gg 1$ by Lemma \ref{lem:distribution}, as $\varepsilon(\tau, z) = +1$ for any $z \in \Lambda$ such that $\varphi_{-\tau}(z) \in \bar \Lambda$ ({note that $\eta(s)$ could \textit{a priori} be infinite}).  Now, consider
$$
A_{{n}}(s) = \chi \int_{\R_+} \chi_{{n}}(t) {e}^{-ts} \iota_X \varphi_{-t}^* [\bar \Lambda] \dd t \in \mathcal{D}'^1(M_\delta^\circ),$$
{where $\chi \in C^\infty_c(M_\delta^\circ)$ is the cutoff function introduced in \S\ref{subsec:resolvent}}. Note that 
\begin{equation}\label{eq:convergencean(s)}
A_{{n}}(s) \to Q_\varrho(s) \iota_X [\bar \Lambda]
\end{equation}
in $\mathcal{D}'^1({M_\delta^\circ})$ when ${n} \to +\infty$ whenever $\Re(s)$ is large enough, where we set
$$
Q_\varrho(s) = \int_0^\infty\varrho(t){e}^{-ts} \varphi_{-t}^*\dd t.
$$
{Indeed, for any $\omega \in \Omega^\bullet(N)$ and $t \in \R$ we have $\|\varphi_{t}^*w\|_{\infty} \leqslant C \exp(C|t|) \|w\|_\infty$ (see for example \cite[Proposition A.4.1]{bonthonneau2015resonances}). 
In particular it holds $|\langle w, \iota_X \varphi_{-t}^*[\bar \Lambda]\rangle|\leqslant C \exp(C |t|) \|w\|_\infty$, and thus, if $\Re(s)$ is large enough, 
$$
A_n(s) \to \chi \int_{0}^\infty \varrho(t){e}^{-ts}\iota_X \varphi_{-t}^* [\bar \Lambda] \dd t
$$ 
as $n \to \infty$ by dominated convergence, and the integral defines a current of order $0$. Now the above integral coincides with $Q_\varrho(s)\iota_X [\bar \Lambda]$ as it follows by approximating $[\bar \Lambda]$ with smooth differential forms, and thus \eqref{eq:convergencean(s)} holds.} 

{Using \eqref{eq:pairinglimit} and Lemma \ref{lem:distribution}, we see that Proposition \ref{prop:pairing} will hold if we are able to show that the pairings $\langle [\Lambda], A_n(s)\rangle$ and $\langle 1, [\Lambda] \wedge Q(s) \iota_X [\bar \Lambda] \rangle$ are well defined, and that
\begin{equation}\label{eq:limitan}
\int_N [\Lambda] \wedge A_n(s) \to \langle 1, [\Lambda] \wedge Q(s) \iota_X [\bar \Lambda] \rangle
\end{equation}
as $n \to \infty$. To prove \eqref{eq:limitan} we will show that the convergence $A_{{n}}(s) \to Q_\varrho(s) \iota_X [\bar \Lambda]$ actually takes place in a finer topology than that of $\mathcal{D}'^1(M_\delta^\circ)$; this is the purpose of Lemma \ref{lem:seminorm} below. Then we will be able to conclude by noting that $\supp((Q(s) - Q_\varrho(s))\iota_X [\bar \Lambda])$ does not intersect $\supp([\Lambda])$.}

We will need the following result ({recall that $\psi$ is the connection form introduced in \S\ref{subsec:structuralforms}}).
\begin{lemm}\label{lem:sturm}
Let $\tau > 0$. Then there is $r > 0$ such that the following holds. For any $z \in M_\delta$ and $t \geqslant \tau$ such that $\varphi_t(z) \in M_\delta$, we have
\begin{equation}\label{eq:eststurm}
\bigl|\bigl\langle \varphi^*_t \psi (z), H(z) \bigr \rangle\bigr| \geqslant r { \bigl|\bigl\langle \varphi^*_t \beta(z), H(z) \bigr \rangle\bigr|.}
\end{equation}
Moreover we have $|\bigl\langle \varphi^*_t \beta(z), H(z) \bigr \rangle\bigr| \geqslant 1$ for any $t \geqslant 0$.
\end{lemm}

\begin{proof}
Let $z \in M_\delta$ and $\tau > 0$. Write $\varphi_t^*\beta(z) = a(t) \beta(z) + b(t) \psi(z)$ and $\varphi_t^*\psi(z) = c(t) \beta(z) + d(t) \psi(z)$ for $t \in \R$. We want to show that for $t \geqslant \tau$ one has {$|c(t)|\geqslant r |a(t)|$} for some $r > 0$. The structural equations (see \S\ref{subsec:structuralforms}) imply $\Lie_X \beta = \psi$ and $\Lie_X \psi = - \kappa \beta$. We thus obtain that $a$ and $b$ satisfy the following differential equation
\begin{equation}\label{eq:sturm}
y''(t) + \kappa(t) y(t) = 0
\end{equation}
where $\kappa(t) = \kappa(\varphi_t(z))$, with $a(0) = 1 = b'(0)$ and $a'(0) = 0 = b(0)$. Also $a'(t) = c(t)$ and $b'(t) = d(t)$. {It is easy to see that \eqref{eq:sturm} and the initial conditions imply $a'(t), a(t) > 0$ for $t > 0$. Thus we have $a'(t)a''(t) = - \kappa(t)a'(t)a(t) \geqslant k a'(t) a(t)$ where $k = \inf_{\Sigma_\delta} |\kappa|$. Integrating this, we get
$$
c(t)^2 = a'(t)^2 \geqslant k (a(t)^2 - 1).
$$
As $a'(t) > 0$ for $t > 0$ we have $a(t)^2 - 1 \geqslant a(\tau)^2 - 1$ for $t \geqslant \tau$, and thus it holds
$$
c(t)^2 \geqslant Ck a(t)^2, \quad t \geqslant \tau,
$$
where $C = 1 - 1 / a(\tau)^2 > 0$ (since $a(\tau) > a(0) = 1$). Setting $r = \sqrt{Ck}$ we obtain \eqref{eq:eststurm}. We conclude the proof of the lemma by noting that $a(t) \geqslant a(0) = 1$.}
\end{proof}

In what follows we set
$\displaystyle{
J_{{n}}(s) = \chi \int_{\R_+} (\chi_{{n}+1}(t) - \chi_{{n}}(t)) {e}^{-ts} \iota_X \varphi_{-t}^* [\bar \Lambda] \dd t \in \mathcal{D}'^1(M_\delta).
}
$
\begin{lemm}\label{lem:seminorm}
There exists a closed conical subset $\Gamma \subset T^*M_\delta$ not intersecting $N^*\Lambda$ such that for any {continuous} semi-norm $q$ on $\mathcal{D}'^1_\Gamma(M_\delta^\circ)$ (see \cite[Equation (8.2.2)]{hor1}), there is $C > 0$ such that
$$
q(J_{{n}}(s)) \leqslant C|s|^C{e}^{(C-\Re(s)){n}}, \quad n \geqslant 0.
$$
\end{lemm}

\begin{proof}
Let $w \in \Omega^2_c(M_\delta^\circ)$ supported in a small coordinate patch $U$ of some point $z_0 \in \Lambda$. Now by definition of $J_{{n}}$ it holds
$$
\langle w, J_{{n}}(s) \rangle = \int_0^\infty (\chi_{{n}+1}(t) - \chi_{{n}}(t)) {e}^{-ts} \left(\int_{\bar \Lambda} \iota_X \varphi_t^*w\right) \dd t.
$$
Let $\xi \in T^*_{z_0}M_\delta$. We identify $T^*U$ with $V \times \R^3$ for some neighborhood $V$ of $0 \in \R^3$. {Consider the Fourier transform $\langle w {e}^{i \langle \xi, \cdot \rangle}, J_n\rangle $; it holds}
\begin{equation}\label{eq:fourier}
\langle w {e}^{i \langle \xi, \cdot \rangle}, J_{{n}} \rangle = \int_{t=n}^{n+2}  \int_{u \in \bar \Lambda}  (\chi_{{n}+1}(t) - \chi_{{n}}(t)) {e}^{-ts} f(t,u) {e}^{i \langle \xi, \varphi_t(u)\rangle} \dd u \dd t,
\end{equation}
where $u$ is a choice of coordinate on $\bar \Lambda$ so that $\partial_u = H(u) \in T_u \bar \Lambda$, and $f$ is a smooth function satisfying for any ${k, \ell} \geqslant 0$
$$
|{\partial_t^k\partial_u^\ell} f(t, u)| \leqslant C_{{k,\ell}} {e}^{C_{{k,\ell}}t}, \quad t \geqslant 0, \quad u \in \bar \Lambda,
$$
for some $C_{{k,\ell}} > 0$\footnote{{The estimates on $f$ follow from the fact that $\|\iota_X \varphi_t^*w\|_{C^\ell} \leqslant C_\ell \exp(C_\ell |t|) \|w\|_{C^\ell}$ (see \cite[Proposition A.4.1]{bonthonneau2015resonances}). Thus 
$$\|\partial_t^k (\iota_X \varphi_t^* w)\|_{C^\ell} = \|\iota_X (\Lie_X)^k \varphi_t^* w\|_{C^\ell} \leqslant C_{k, \ell} \exp(C_{k, \ell} |t|) \|w\|_{C^{k + \ell}}.$$
Here we denoted by $\|\cdot\|_{C^\ell}$ the $C^\ell$ norm on $C^\infty(N, \wedge^\bullet T^*N$).}}; {note also that, as $w$ is supported in the coordinate patch $U$, we have that $f(t,u) = 0$ whenever $\varphi_t(u) \notin U$ and thus the expression ${e}^{i \langle \xi, \varphi_t(u)\rangle}$ is well defined on the support of $f$.} Now we have
\begin{equation}\label{eq:diffphase}
\partial_u \langle \xi, \varphi_t(u)\rangle = \langle \xi, \dd \varphi_t(u) H(u) \rangle, \quad \partial_t \langle \xi, \varphi_t(u)\rangle = \langle \xi, X(\varphi_t(u))\rangle.
\end{equation}
Let $\Gamma' = \{(z, \xi) \in T^*M_\delta {~:~}z \in \Lambda,~|\langle \xi, H({z}) \rangle|< \varepsilon |\xi|\}$ for $\varepsilon > 0$ small. Let $(z,\xi) \in \Gamma'$, $u \in \bar \Lambda$ and $t > 0$ such that $\varphi_t(u) = z \in U$. Then $t \geqslant \tau$ where $\tau > 0$ is a fixed  number which is smaller than the length of the shortest orthogeodesic. We decompose $\xi$ in the $(\alpha(z), \beta(z), \psi(z))$ basis as $\xi = \xi_\alpha \alpha(z) + \xi_\beta \beta(z) + \xi_\psi \psi(z)$. We have, since $\varphi_t^*\alpha(u) = \alpha(u)$,
$$
\begin{aligned}
\langle \xi,~\dd \varphi_t(u) H(u) \rangle &= \left\langle \varphi_t^*\bigl[\xi_\alpha \alpha(z) + \xi_\beta \beta(z) + \xi_\psi \psi(z)\bigr],~H(u)\right\rangle \\
&= \langle \xi_\beta \cdot \varphi_t^*\beta(u) + \xi_\psi \cdot \varphi_t^*\psi(u),~H(u) \rangle.
\end{aligned}
$$
Thus by Lemma \ref{lem:sturm} and the triangle inequality we have
$$
|\langle \xi,~\dd \varphi_t(u) H(u) \rangle| \geqslant r|\xi_\psi| - |\xi_\beta| ,
$$
for some $r > 0$ depending only on $\tau$ (indeed, the above inequality is obviously true even if $r|\xi_\psi| - |\xi_\beta| \leqslant 0$).
As $\xi \in \Gamma'$ we have 
\begin{equation}\label{eq:c(eps)}
|\xi_\beta|\leqslant \displaystyle{\frac{C\varepsilon}{1-\varepsilon}}(|\xi_\psi| + |\xi_\alpha|)
\end{equation}
for some $C > 0$. Therefore, we obtain
$$
|\partial_u \langle \xi, \varphi_t(u)\rangle| \geqslant (r - c(\varepsilon))|\xi_\psi| - c(\varepsilon)|\xi_\alpha|, \quad |\partial_t \langle \xi, \varphi_t(u)\rangle| = |\xi_\alpha|,
$$
where $c(\varepsilon) \to 0$ as $\varepsilon \to 0.$
Combining the estimate above with \eqref{eq:c(eps)} we obtain that there are $c,C > 0$ such that for any $\xi \in \Gamma'$ it holds
$$
C^{-1}\left\| \dd_{t,u}\left({e}^{i\langle \xi, \varphi_t(u)\rangle}\right)\right\| \geqslant (r-c(\varepsilon))|\xi_\psi| + (1 - c(\varepsilon)) |\xi_\alpha| \geqslant c(|\xi_\psi| + |\xi_\alpha|) \geqslant \frac{c}{2} |\xi|,
$$
provided that $\varepsilon$ is small enough. In particular we may apply the non-stationary phase method ({see for example \cite[Lemma 3.14]{zworski}}) to obtain that for any ${L} > 0$ we have $C_{{L}}$ such that 
$$
\left|\left\langle w{e}^{i\langle \xi, \cdot \rangle}, J_{{n}}(s) \right\rangle\right | \leqslant C_{{L}} |s|^L {e}^{(C_{{L}}-\Re(s)){n}} \langle \xi \rangle^{-{L}}, \quad \xi \in \Gamma',
$$
where $\langle \xi \rangle = \sqrt{1 + |\xi|^2}$ for some norm $|\cdot|$ on $T^*M_\delta$. By setting $\Gamma = \complement \Gamma'$, we obtain the sought result.
\end{proof}

This last result implies that for any {continuous} semi norm $q$ of $\mathcal{D}'^1_{\Gamma}(M_\delta^\circ),$ we have 
$$
q\left(A_{{n}}(s) - Q_\varrho(s)\iota_X[\bar \Lambda]\right) \to 0
$$
as ${n} \to +\infty$ if $\Re(s)$ is large enough (depending on $q$). For any finite set $\mathrm Q$ of continuous semi norms of $\mathcal{D}'_\Gamma(M_\delta^\circ)$ we define
$$
\mathcal{D}'^\bullet_{\Gamma, \mathrm Q}(M_\delta^\circ) = \{u \in \mathcal{D}'^\bullet(M_\delta^\circ)~:~q(u) < \infty,~q \in \mathrm Q\}.
$$
This set is endowed with the following topology: we say that $u_n \to u$ in $\mathcal{D}'^\bullet_{\Gamma, \mathrm Q}(M_\delta^\circ)$ if the convergence holds in $\mathcal{D}'^\bullet(M_\delta^\circ)$ and $\sup_n q(u_n) < \infty$ for any $q \in \mathrm Q$.
%%
\iffalse
\begin{lemm}
Let $u \in \mathcal{D}'(\R^d)$ be a compactly supported distribution and $\Gamma \subset \R^d$ a closed conical subset such that $\WF'(u) \cap \Gamma = \emptyset$. Then there are continuous semi-norms $q_1, \dots, q_k$ of $\mathcal{D}'_\Gamma(\R^d)$ such that the following holds. Assume that $(v_n)$ is a sequence of distributions such that $v_n \to v$ in $\mathcal{D}'(\R^d)$ for some $v \in \mathcal{D}'(\R^d)$ and $q_j(v_n -v) \to 0$ as $n \to \infty$ for each $j = 1, \dots, k$. Then the products $uv_n$ and $u v$ are well defined and $u v_n \to uv$ as $n \to \infty.$
\end{lemm}
\begin{proof}
Let $(u_\varepsilon)_{\varepsilon > 0}$ be a sequence in $C^\infty_c(\R^d)$ converging to $u$ in $\mathcal{D}'_{\WF(u)}(\R^d)$ (such a sequence exists by \cite[Theorem 8.2.3]{hor1}). Then $u_\varepsilon v_n \in \mathcal{D}'(\R^d)$ for each $\varepsilon$ and $n$. Let $\Gamma'$ be a closed conical neighborhood of $\WF(u)$ such that $\Gamma' \cap \Gamma = \emptyset$.
Take $\varphi \in C^\infty_c(\R^d)$. Then Plancherel's theorem yields
$$
\langle u_\varepsilon v_n, \varphi \rangle = \langle u_{\varepsilon}, \bar v_n \varphi \rangle = c \int_{\R^d} \hat u_\varepsilon(\xi) \widehat{v_n \bar \varphi}(-\xi) \dd \xi
$$
for some $c \neq 0$. Next there are $C, K > 0$ such that $|\hat u_\varepsilon(\xi)| \leqslant C \langle \xi \rangle^{K}$ for any $\xi$.
$$
\langle u_\varepsilon v_n, 1 \rangle_{L^2} = C \langle \hat u_\varepsilon, \hat v_n \rangle_{L^2}
$$
as it follows from
\end{proof}
\fi
%%
Then, since $[\Lambda]$ is compactly supported in $M_\delta^\circ$ and $\WF'([\Lambda]) \cap \Gamma = \emptyset$, we may reproduce the proof of the H\"ormander's theorem about product of distributions \cite[Theorem 8.2.10]{hor1} to obtain that there exists a finite set $\mathrm Q$ of semi norms of $\mathcal{D}'^\bullet_\Gamma(M_\delta^\circ)$ (which depends on $[\Lambda]$) such that the product $[\Lambda] \wedge u$ is well defined for any $u \in \mathcal{D}'^\bullet_{\Gamma, \mathrm Q}(M_\delta^\circ)$ and such that the map 
\begin{equation}\label{eq:continuousmapq}
\mathcal{D}'^\bullet_{\Gamma, \mathrm Q}(M_\delta^\circ) \to \mathcal{D}'^\bullet(M_\delta^\circ), \quad u \mapsto [\Lambda] \wedge u,
\end{equation}
is continuous. By Lemma \ref{lem:seminorm}, if $\Re(s)$ is large enough, the sequence $n \mapsto q(A_n(s))$ is bounded for any $q \in \mathrm Q$ and letting $n \to \infty$ yields $q(Q_\varrho(s)\iota_X [\bar \Lambda]) < \infty$ for every $q \in \mathrm Q$. Thus the products $[\Lambda] \wedge A_n(s)$ and $[\Lambda] \wedge Q_\varrho(s)\iota_X[\bar \Lambda]$ are well defined, and by continuity of the map \eqref{eq:continuousmapq}, we get
$$
[\Lambda] \wedge A_n(s) \to [\Lambda] \wedge Q_\varrho(s)\iota_X[\bar \Lambda], \quad n \to \infty,
$$
where the convergence holds in $\mathcal{D}'^\bullet(M_\delta^\circ).$ However, we have
$$
(Q(s)-Q_\varrho(s))\iota_X [\bar \Lambda] = \int_0^\infty (1 - \varrho(t)) {e}^{-ts}\varphi_{-t}^*\iota_X [\Lambda],
$$
and thus $\supp\bigl((Q(s)-Q_\varrho(s))\iota_X [\bar \Lambda]\bigr) \cap \supp([\Lambda]) = \emptyset$, since $\varphi_t(z) \notin \Lambda$ for $0 \leqslant t \leqslant \varepsilon$ and $z \in \bar \Lambda$. This yields 
$$
[\Lambda] \wedge Q_\varrho(s)\iota_X[\bar \Lambda] = [\Lambda] \wedge Q(s)\iota_X[\bar \Lambda],$$ and in particular, \eqref{eq:limitan} holds. This completes the proof of Proposition \ref{prop:pairing}.
\end{proof}

\section{Value of the Poincar\'e series at the origin}

In this section we show that $\eta(s)$ vanishes at $s = 0$.

\subsection{Behavior of $Q(s)$ at $s=0$}\label{subsec:behaviors=0}
{
By \cite[Theorem 2]{dyatlov2016pollicott}, we have the Laurent development
\begin{equation}\label{eq:laurentbad}
Q(s) = Y(s) + \sum_{j=1}^J  \frac{\chi (\Lie_X)^{j-1} \Pi \chi}{s^j}
\end{equation}
for some $J \geqslant 1$, where $s \mapsto Y(s)$ is holomorphic near $s=0$, and $\Pi : \Omega_c^\bullet(M_\delta^\circ) \to \mathcal{D}'^\bullet(M_\delta^\circ)$ is a finite rank projector satisfying 
\begin{equation}\label{eq:wfsupppi}
\supp(\Pi) \subset \Gamma_+ \times \Gamma_- \quad \text{ and } \quad \WF(\Pi) \subset E_{+}^* \times E_{-}^*.\end{equation}
Moreover, it holds $\mathrm{ran}(\Pi) = C^\bullet$ where
$$
C^{\bullet} = \left\{u \in \mathcal{D}'^\bullet_{E_+^*}(M_\delta^\circ)~:~\supp(u) \subset \Gamma_+,~(\Lie_X)^J u = 0\right\}.
$$
Elements of $C^\bullet$ are called generalized resonant states for $X$ (for the resonance $0$). A generalized resonant state $u$ is simply called a resonant state if $\Lie_X u = 0$. In what follows, we will set $\Omega^\bullet_0 = \Omega^\bullet_c(M_\delta^\circ) \cap \ker(\iota_X)$ and $C^\bullet_0 = C^\bullet \cap \ker(\iota_X)$. Since $\Lie_X \alpha = 0$ we have the decomposition
\begin{equation}\label{eq:decc}
C^\bullet = C^\bullet_0 \oplus \alpha \wedge C^{\bullet - 1}_0,
\end{equation}
and this decomposition is preserved by $\Pi.$ We now invoke a result of Hadfield (see \cite[Propositions 3, 4, 5]{hadfield2018zeta}) which implies that 
\begin{equation}\label{eq:hadfield}
C^0_0 = \{0\}, \quad C^2_0 = \{0\} \quad \text{and}  \quad \Lie_X (C^1_0)= \{0\}.
\end{equation}
In particular by \eqref{eq:decc} we have $\Lie_X \Pi = 0$ and thus \eqref{eq:laurentbad} yields
\begin{equation}\label{eq:laurent}
Q(s) = Y(s) + \frac{\chi \Pi \chi}{s}.
\end{equation}
Now let us decompose $\Pi$ as
$$
\Pi|_{\Omega_c^1(M_\delta^\circ)} = \sum_{j=1}^r u_j \otimes \beta_j,
$$
where $(u_j, \beta_j) \in \mathcal{D}'^{1}_{E_+^*}(M_\delta^\circ) \times \mathcal{D}'^{2}_{E_-^*}(M_\delta^\circ)$ satisfy $\supp u_j \subset \Gamma_+$ and $\supp \beta_j \subset \Gamma_-$ (such a decomposition necessarily exists by \eqref{eq:wfsupppi}). Then $\beta_j$ is a coresonant state for $X$, meaning that it is a resonant state for $-X$; applying \cite[Propositions 3, 4, 5]{hadfield2018zeta} for the vector field $-X$, we therefore obtain that $\beta_j = \alpha \wedge s_j$ for some coresonant state $s_j \in \mathcal{D}'^1_{E_-^*}(M_\delta^\circ)$ (indeed, we note that \eqref{eq:hadfield} gives $C^2 = \alpha \wedge C^1_0$, and we apply this to the vector field $-X$ instead of $X$}). Also, it follows from \cite[Lemma 6]{hadfield2018zeta} that the currents $u_j$ and $s_j$ are closed.

Summarizing the above results, we get
\begin{equation}\label{eq:pidecomp}
\Pi = \sum_{j=1}^r u_j \otimes \alpha \wedge s_j
\end{equation}
where $(u_j, s_j) \in \mathcal{D}'^1_{E_+^*}(M_\delta^\circ) \times \mathcal{D}'^1_{E_-^*}(M_\delta^\circ)$ satisfy
\begin{equation}\label{eq:propu}
\supp(u_j) \subset \Gamma_+, \quad \supp(s_j) \subset \Gamma_-, \quad \dd u_j = \dd s_j = 0, \quad \iota_X u_j = \iota_X s_j = 0.
\end{equation}
In particular, we have $\int_{M_{\delta}^\circ} \iota_X [\bar \Lambda] \wedge \alpha \wedge s_j = \int_{M_\delta^\circ} [\bar \Lambda] \wedge s_j$ and thus
$$
\langle [\Lambda], \Pi \iota_X [\bar \Lambda] \rangle ={\sum_{j=1}^{r} \left( \int_{M_\delta^\circ} [\Lambda] \wedge u_j \right) \left(\int_{M_\delta^\circ}  [ \bar \Lambda] \wedge s_j \right).
}
$$
Note that those products make sense since by Lemma \ref{lem:rpm} it holds
$$
E_+^* \cap N^*\Lambda \subset \{0\} \quad \text{and} \quad E_-^* \cap N^*\bar \Lambda \subset \{0\}.
$$
Let $\eta > 0$ and set $\Gamma_+^\eta = \{z \in M_\delta {~:~}\mathrm{dist}(z, \Gamma_+) < \eta\}$. By \cite[Lemma 6]{hadfield2018zeta}, we may find $f_j \in \mathcal{D}'(M_\delta^\circ)$ such that 
$$
\supp(f_j) \subset \Gamma_+^\eta, \quad \WF(f_j) \subset E_+^*, \quad \Lie_X f_j \in C^\infty_c(M_\delta^\circ),
$$
and such that $v_j = u_j-\dd f_j$ is smooth. Now since $[\Lambda]$ is compactly supported in $M_\delta^\circ$, we have
$
\displaystyle{\int_{M_\delta^\circ} [\Lambda] \wedge \dd f_j = 0}
$
(since $\dd [\Lambda] = 0$ as $\partial \Lambda = \emptyset$)
and thus
\begin{equation}\label{eq:uv}
\int_M [\Lambda] \wedge u_j = \int_M [\Lambda] \wedge v_j.
\end{equation}
Finally, take the coordinates $(\rho, {\tau}, \theta)$ given by Lemma {\ref{lem:coord}} near $\partial M = \{\rho = 0\}$ (here we assume for simplicity that $\partial M$ is connected but the exact same proof applies if it is not). We have $\Lambda = \{(0, {\tau}, +\pi / 2) {~:~} {\tau} \in \R/\ell_\star \Z\}$, and $\partial M_\delta = \{(\delta, {\tau}, \theta) {~:~}{\tau \in \R/\ell_\star \Z},~\theta \in \R/2\pi\Z\}$. Consider
$$
S_1 = \{(\rho, {\tau}, \pi / 2) {~:~}\rho \in [0, 2\delta / 3],~{\tau} \in \R/\ell_\star \Z\}
$$
and
$$
S_2 = \{(2 \delta / 3, \tau, \theta) {~:~} {\tau} \in \R/\ell_\star \Z,~ \theta \in [\pi / 2, 3\pi / 2]\}.
$$
Let $[S] = [S_1] + [S_2]$ and note that $\dd [S] = [\Lambda] - [W]$ by Stokes' theorem, where $$
W = \{(2 \delta / 3, {\tau}, 3\pi / 2) {~:~}\tau \in \R/\ell_\star \Z\}
$$
is the incoming normal set of ${\{\rho = 2\delta / 3\}}$, which is oriented with $\partial_\tau$. Now if $\eta$ is small enough
$$
\int [\Lambda] \wedge v_j = \int [W] \wedge v_j = 0
$$
since $W \cap \Gamma_+^\eta = \emptyset$ (indeed, $W$ is at positive distance of $\Gamma_+$) and $\supp v_j \subset \Gamma_+^\eta$. Thus we obtained that $s \mapsto \eta(s)$ has no pole at $s = 0$, and by Proposition \ref{prop:pairing} it holds
$$
\eta(0) = -\langle [\Lambda], Y(0) \iota_X [\bar \Lambda]\rangle.
$$

\subsection{Value at $s=0$}
In this subsection we prove Theorem \ref{thm:main}, that is, $\eta(0) = 0$.
Let us define 
$$
S_1 = \{\varphi_t(z) {~:~}0 \leqslant t \leqslant \delta / 2,~z \in \Lambda\}, \quad S_2 = \{ R_\theta(z) {~:~} 0 \leqslant \theta \leqslant \pi,~z \in \Lambda' \},
$$
where $R_\theta : M_\delta \to M_\delta$ is the rotation of angle $\theta$, and $\Lambda' = \varphi_{\delta / 2}(\Lambda)$. We orient $\Lambda'$ with the orientation of $\Lambda$. The manifold $S_1$ is oriented by declaring that $(\partial_t, \partial_\tau)$ is positively oriented (here $\partial_\tau$ is any positive basis of $T\Lambda$), and $S_2$ is oriented by declaring that the basis $(\partial_\theta, \partial_\tau')$, where $\partial_\tau'$ is any positive basis of $T\Lambda'$. Let $\Lambda'' = R_{\pi}(\Lambda')$. Note that (\ref{eq:wf}) implies, {by multiplication of wavefront sets (see \cite[Theorem 8.2.14]{hor1})},
\begin{equation}\label{eq:wfyo}
\WF(Y(0)\iota_X[\bar \Lambda])\subset  E_+^* \cup  N^*\bar \Lambda \cup \bigcup_{\substack{t \geqslant 0 \\ z \in \bar \Lambda}} \R \Phi_t(\psi(z)),
\end{equation}
where $Y(0)$ comes from (\ref{eq:laurent}). In what follows, we will set $Y = Y(0) \iota_X [\bar \Lambda]$ for simplicity. Since $E_+^* \cap N^*\Lambda \subset \{0\}$, and because $\delta$ is small, we have
\begin{equation}\label{eq:e+*n*}
E_+^* \cap N^*\Lambda' \subset \{0\}.
\end{equation}
Next, by analytic continuation and \eqref{eq:formularesolv}, it holds
$$
\supp(Y) \subset \overline{\bigcup_{t \geqslant 0} \varphi_t(\bar \Lambda)}.
$$
The right hand side of the above equation is disjoint from $\Lambda''$ by strict convexity of $M_\delta$. Thus 
\begin{equation}\label{eq:ylambda''}
\supp(Y) \cap \Lambda'' = \emptyset.
\end{equation}
Now for $z \in \Lambda$ and $t \in (0, \delta /2)$, we have
$
N^*_{\varphi_t(z)}(S_1 \setminus \partial S_1) \subset  \R \Phi_t(\psi(z))$ and  $N^*(S_2 \setminus \partial S_2) \subset \{\xi {~:~} \langle \xi, V \rangle = 0\}.$ 
In particular, by Lemmas \ref{lem:rpm} and \ref{lem:sturm}, we have
\begin{equation}\label{eq:wfysj}
\WF(Y) \cap N^*_{\varphi_t(z)}(S_j \setminus \partial S_j) = \emptyset, \quad j = 1,2.
\end{equation}
Finally, for $z \in \Lambda$, we have for $j = 1,2$, setting $z' = \varphi_{\delta/2}(z) \in \Lambda'$,
\begin{equation}\label{eq:wfsj}
\WF([S_j]) \cap T^*_{z'} M_\delta \subset \R\alpha(z') \oplus \R \Phi_{\delta/2}(\psi(z)).
\end{equation}
Combining \eqref{eq:wfyo}, \eqref{eq:e+*n*}, \eqref{eq:ylambda''}, \eqref{eq:wfysj} and \eqref{eq:wfsj}, we obtain that the intersection $\WF([S_j]) \cap \WF(Y)$ is contained in
$$
\left(\bigcup_{z \in \Lambda} \R \Phi_{\delta / 2}(\psi(z))\right) \cap \Biggl(~\bigcup_{\substack{t \geqslant 0 \\ \bar z \in \bar \Lambda}} \R \Phi_t(\psi(\bar z))\Biggr).
$$
However, by Lemma \ref{lem:sturm}, for any $\bar z \in \bar \Lambda$ and $t > 0$, we have $\Phi_{t}(\psi(\bar z)) \notin \R \psi(\varphi_t(\bar z))$. Therefore the above intersection is contained in the zero section and we get
\begin{equation}\label{eq:summarizing}
\WF([S_j]) \cap \WF(Y) = \emptyset, \quad j = 1,2,
\end{equation}
and in particular the product $[S_j] \wedge Y$ is well defined.
By Stokes' theorem, taking into account the orientations, we have
\begin{equation}\label{eq:dsj}
\dd[S_1] = [\Lambda'] - [\Lambda], \quad \dd [S_2] = [\Lambda''] - [\Lambda'].
\end{equation}
Then by (\ref{eq:inverse}) and the facts that $\dd [\bar \Lambda] = 0$, $[\dd, Y(0)] = 0$ (on $\{\chi = 1\}$) and $[\iota_X, Y(0)] = 0$ we have, by using \eqref{eq:laurentbad},
$$
\dd Y = \dd \iota_X Y(0) [\bar \Lambda]
= \Lie_X Y(0) [\bar \Lambda] 
= [\bar \Lambda] - \Pi([\bar \Lambda])
= [\bar \Lambda]
\quad \text{on} \quad \{\chi = 1\}
$$
as $\Pi( [\bar \Lambda]) = 0$ by \S\ref{subsec:behaviors=0} (we showed that $\int_{M_\delta^\circ} [\Lambda] \wedge u_j = 0$ for all $j$ but the same holds for $[\bar \Lambda]$ and $s_j$).
By Stokes' theorem, since $[S_j] \wedge Y$ is compactly supported in $M_\delta^\circ$ and $\dd Y = [\bar \Lambda]$ on $\{\chi = 1\} \supset \supp([S_j])$ ($j= 1, 2)$,
$$
\begin{aligned}
\int_{M_\delta} [\Lambda] \wedge Y &= -\int_{M_\delta} \dd [S_1] \wedge Y - \int_{M_\delta} \dd [S_2] \wedge Y + \int_{M_\delta} [\Lambda''] \wedge Y\\
&= \int_{M_\delta} [S_1] \wedge [\bar\Lambda] + \int_{M_\delta} [S_2] \wedge [\bar \Lambda] + \int_{M_\delta} [\Lambda''] \wedge Y.
\end{aligned}
$$
Finally, we have
$
\supp([S_j]) \cap \supp([\bar \Lambda]) = \emptyset
$
and by \eqref{eq:ylambda''}
we conclude that $$\eta(0) = \int_{M_\delta} [\Lambda] \wedge Y = 0.$$

\section{Poincar\'e series for geodesic arcs linking two points}
We fix $x \neq y \in \Sigma$. We consider
$$
\eta_{x,y}(s) = \sum_{\gamma : x \leadsto y} {e}^{-s \ell(\gamma)},
$$
where the sum runs over all the (oriented) geodesics joining $x$ to $y$. For $a \in \Sigma$ we will set $\Lambda_a = S_a\Sigma$. Note that 
$
T_z\Lambda_a = \R V(z)
$
for $z \in \Lambda_a$ (this follows from the definition of $V$ in \S\ref{subsec:structuralforms}), and we orient $\Lambda_a$ according to $V$.
 {In this context, we have the counterpart of Proposition \ref{prop:pairing}, as follows.
\begin{prop} For $\Re(s)$ large enough it holds
$$
\eta_{x,y}(s) = -\langle [\Lambda_x],~Q(s)\iota_X[\Lambda_y] \rangle.
$$
\end{prop}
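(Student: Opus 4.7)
The plan is to interpret the Poincaré series as the Laplace transform of the counting distribution $\mu$ from Lemma \ref{lem:distribution}, rewrite that distribution via the resolvent using the identity $\iota_X\varphi_{-t}^* = \varphi_{-t}^*\iota_X$, and finally swap $R(s)$ for $Q(s)$ using the fact that the cutoff $\chi$ equals $1$ on $\Lambda \cup \bar\Lambda$.

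First I would set up a bijection between the orthogeodesics $\gamma \in \mc G^\perp$ and the intersection points appearing in $\mu$. An orthogeodesic $\gamma:[0,\ell]\to\Sigma$ lifts to a geodesic flow trajectory with initial point $(\gamma(0),-\nu(\gamma(0)))\in\bar\Lambda$ and terminal point $(\gamma(\ell),\nu(\gamma(\ell)))\in\Lambda$, so $\varphi_{\ell(\gamma)}$ sends a point of $\bar\Lambda$ to a point of $\Lambda$, yielding a point of $\Lambda\cap\varphi_{\ell(\gamma)}(\bar\Lambda)$; this lift is clearly bijective.

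Next I would check that the sign $\varepsilon(\tau,z)$ equals $+1$ for every such intersection. Writing $z_0=\varphi_{-\tau}(z)\in\bar\Lambda$, we have $T_{z_0}\bar\Lambda=\R H(z_0)$ and $T_z\Lambda=\R H(z)$. Since $\Lie_X\alpha=0$, the differential $\dd\varphi_\tau$ preserves $\ker\alpha=\R H\oplus\R V$, so one can write
$$
\dd_{z_0}\varphi_\tau (H(z_0))=a(\tau)H(z)+b(\tau)V(z),
$$
with $a,b$ solving a Jacobi-type system dictated by the commutation relations in Section \ref{subsec:structuralforms}. Since $(\Sigma_\delta,g)$ has negative curvature and no conjugate points, the Riccati/Sturm comparison argument already invoked in the paper ensures $b(\tau)$ has constant sign for all $\tau>0$; unwinding the determinant in the positively oriented frame $(X,H,V)$ shows that the triple $(H(z),X(z),aH(z)+bV(z))$ has a constant orientation, which matches the sign convention of Lemma \ref{lem:distribution}. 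The outcome is $\varepsilon(\tau,z)\equiv 1$ along all orthogeodesic intersections, so for $\Re(s)$ large,
$$
\eta(s)=\int_0^{+\infty}\e^{-st}\mu(t)\,\dd t.
$$

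Then I would substitute the second expression of Lemma \ref{lem:distribution} for $\mu(t)$ and interchange the $t$-integral with the pairing. Because $\iota_X$ commutes with $\varphi_{-t}^*$ (as $X$ generates $\varphi_t$) and the sum defining $R(s)[\bar\Lambda]$ converges absolutely on a neighborhood of $\Lambda$ for $\Re(s)$ sufficiently large (the intersections are discrete and grow at most exponentially in length, by a standard argument from hyperbolic dynamics on $K$), Fubini gives
$$
\eta(s)=-\Bigl\langle 1,[\Lambda]\wedge \iota_X R(s)[\bar\Lambda]\Bigr\rangle=-\Bigl\langle 1,[\Lambda]\wedge R(s)\iota_X [\bar\Lambda]\Bigr\rangle.
$$
Finally, since the cutoff $\chi$ equals $1$ on a neighborhood of $M=S\Sigma$ and both $\Lambda$ and $\bar\Lambda$ lie in $M$, inserting $\chi$ on both sides of $R(s)$ does not change the pairing, so $R(s)\iota_X[\bar\Lambda]$ may be replaced by $Q(s)\iota_X[\bar\Lambda]=\chi R(s)\chi\,\iota_X[\bar\Lambda]$, yielding the claimed identity.

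The main obstacle is the orientation step: ensuring that $\varepsilon(\tau,z)=1$ for every orthogeodesic intersection, which requires the Jacobi-field/Riccati analysis to guarantee that the $V$-component of $\dd\varphi_\tau(H(z_0))$ has the right sign for all $\tau>0$. The remaining steps are mostly a bookkeeping exercise in swapping sums, integrals and pairings, justified by the wavefront estimates in (\ref{eq:wf}) together with convergence of $R(s)$ in the large $\Re(s)$ regime.
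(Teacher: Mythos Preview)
Your overall strategy coincides with the paper's: identify $\eta(s)$ with the Laplace transform of $\mu$, use Lemma~\ref{lem:distribution}, verify $\varepsilon(\tau,z)=+1$, and pass from finite-time truncations to $Q(s)\iota_X[\bar\Lambda]$. The orientation check and the replacement of $R(s)$ by $Q(s)$ are fine.

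The gap is in the step you call ``bookkeeping''. You need to pass from
\[
\bigl\langle 1,\ [\Lambda]\wedge A_N(s)\bigr\rangle \longrightarrow \bigl\langle 1,\ [\Lambda]\wedge Q(s)\iota_X[\bar\Lambda]\bigr\rangle,
\]
where $A_N(s)=\chi\int_0^\infty \chi_N(t)\e^{-ts}\iota_X\varphi_{-t}^*[\bar\Lambda]\,\dd t$. Convergence $A_N(s)\to Q(s)\iota_X[\bar\Lambda]$ in $\mathcal{D}'^1(M_\delta^\circ)$ is easy for $\Re(s)\gg 1$, but the wedge product $[\Lambda]\wedge(\cdot)$ is \emph{not} continuous on $\mathcal{D}'^1$; it is only (sequentially) continuous on $\mathcal{D}'^1_\Gamma$ for a closed cone $\Gamma$ disjoint from $N^*\Lambda$. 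The wavefront bound~(\ref{eq:wf}) tells you $\WF(Q(s)\iota_X[\bar\Lambda])$ avoids $N^*\Lambda$, but says nothing about the tails $A_{N+1}(s)-A_N(s)$, and knowing $\WF(A_N)\subset\Gamma$ for all $N$ together with $A_N\to A$ in $\mathcal{D}'$ does not imply convergence in $\mathcal{D}'_\Gamma$. Your phrase ``converges absolutely on a neighborhood of $\Lambda$'' is also not meaningful here: $\varphi_{-t}^*[\bar\Lambda]$ is a genuine current, not a function, so there is no pointwise absolute convergence to invoke.

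This is exactly why the paper inserts two lemmas inside the proof. Lemma~\ref{lem:sturm} is a Sturm--Liouville estimate showing that $\Phi_t(\psi(z))$, for $z\in\bar\Lambda$ and $t\geq\tau>0$, stays uniformly away from $N^*\Lambda=\R\alpha\oplus\R\psi$ (its $\beta$-component dominates its $\psi$-component). This yields a fixed cone $\Gamma$ with $\Gamma\cap N^*\Lambda=\emptyset$ containing the wavefront sets of all tails. The second lemma then runs a non-stationary phase argument, using Lemma~\ref{lem:sturm} to bound the phase gradient from below, to obtain quantitative decay of every semi-norm of $\mathcal{D}'^1_\Gamma$ on the tail $J_N(s)$; this is what actually gives convergence in $\mathcal{D}'^1_\Gamma$ and hence the interchange. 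Without an argument of this type your Fubini step is unjustified, and the proof is incomplete.
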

Note that the above pairing makes sense, since we have the inclusion $\WF([\Lambda_x]) \subset N^*\Lambda_x$ which gives $\WF(Q(s)\iota_X[\Lambda_y]) \cap \WF([\Lambda_x]) = \emptyset$ (the emptiness of the last intersection can be seen by proceeding as in \S\ref{subsec:pairing}).
}
{
\begin{proof}[Sketch of the proof]
Using Remark \ref{rem:neq}, we see that for $t\geqslant 0$ and $z \in \Lambda_x$ such that $\varphi_{-t}(z) \in \Lambda_y$, one has the direct sum
$$
T_z M = T_z \Lambda_x \oplus \R X(z) \oplus \dd_{\varphi_{-t}(z)} \varphi_{t} (T_{\varphi_{-t}(z)} \Lambda_y).
$$
Moreover we check that the orientation of the right-hand side has the same orientation of $M$, again by Remark \ref{rem:neq}. Thus we have the counterpart of Lemma \ref{lem:distribution} in this context and for any $\chi \in C^\infty_c(\R_+)$ it holds
$$
\sum_{\gamma : x \rightsquigarrow y} \chi(\ell(\gamma)) = -\int_N [\Lambda_x] \wedge \int_{\R_+} \chi(t) \iota_X \varphi_{-t}^*[\Lambda_y] \dd t.
$$
Now we may proceed as in the proof of Proposition \ref{prop:pairing} to obtain the sought result, by approximating the function $t \mapsto \exp(-ts)$ with compactly supported functions of the form $t \mapsto \chi_n(t) \exp(-ts)$ and taking the limit as $n \to \infty$ (one should use appropriate versions of Lemmas \ref{lem:sturm} and \ref{lem:seminorm} to justify the convergence of the pairings).
\end{proof}
}
This result implies that $s \mapsto \eta_{x,y}(s)$ extends meromorphically to the whole complex plane, since $s \mapsto Q(s)$ does. To compute its value at zero, we will need the following
\begin{lemm}\label{lem:dang}
There exists $[S] \in \mathcal{D}'^{1}_c(M_\delta^\circ)$ with $\supp([S]) \subset M$,  $\WF([S]) \cap \WF([\Lambda_y]) = \emptyset$ and 
\begin{equation}\label{eq:lambdaxs}
[\Lambda_x] = {-}\frac{1}{\chi(\Sigma)} [\bar \Lambda] - \dd [S], \quad \int_{M_\delta} [S]\wedge [\Lambda_y] = \frac{1}{\chi(\Sigma)}.
\end{equation}
\end{lemm}
{
\begin{proof}
Here we adapt the arguments of \cite[\S6.3.2]{dang2020poincar}. Let $f_1 : \Sigma_\delta \to \R$ be a smooth function which coincides with $-\rho$ on $\{|\rho|\leqslant \delta\}$ (here $\rho$ is the coordinate given by Lemma \ref{lem:coord}) and such that $\dd f_1(y) \neq 0$ for any $y \in \partial \Sigma$. The set of Morse functions being open and dense \cite[Theorem 5.6]{laudenbach2011transversalite} in $C^\infty(\Sigma_\delta)$, we may find a Morse function $f_2 \in C^\infty(\Sigma_\delta)$ which is arbitrarily close to $f_1$ in the $C^1$ norm. Let $\chi_0 \in C^\infty(\Sigma_\delta, [0,1])$ such that $\chi_0 = 1$ near $\partial \Sigma$ and $\supp \chi_0 \subset \{|\rho|\leqslant \delta / 2\}$. Note that $\|\dd f_1\| = \|\dd \rho \|\geqslant C$ on $\{|\rho|\leqslant \delta\}$ for some $C > 0$, where $\|\cdot\|$ is any norm on $T^*\Sigma_\delta$. In particular, if $f_2$ is chosen close enough to $f_1$ in the $C^1$ topology, the function $f = \chi_0 f_1 + (1 - \chi_0) f_2$ is also a Morse function. Indeed, $f$ coincides with $f_2$ on $\Sigma_\delta \setminus \{|\rho|\leqslant \delta\}$; moreover $f - f_1 = (1 - \chi_0)(f_2 - f_1)$ so that $\| \dd f \| \geqslant C/2$ on $\{|\rho|\leqslant \delta\}$ whenever $f_2$ is close enough to $f_1$. Next we set
$$
S_f = \left\{\left(b, \frac{\nabla^g f(b)}{\|\nabla^g f(b)\|}\right)~:~b \in {\Sigma \setminus \mathrm{crit}(f)}\right\} \subset M_\delta,
$$
where $\nabla^gf \in C^\infty(\Sigma_\delta, T\Sigma_\delta)$ is the gradient of $f$ with respect to the metric $g$, and $\mathrm{crit}(f) = \{\dd f = 0\}$ is the set of critical points of $f$. We orient $S_f$ according to the orientation of $\Sigma$. Then by \cite[Lemma 6.7]{dang2020poincar} and Stokes' theorem, we obtain that the integration current $[S_f]$ extends to a current on $N$ and we have\footnote{Indeed, the boundary of $S_f$ (near $\partial M$) is $\bar \Lambda$. In the coordinates of Lemma \ref{lem:coord}, $\bar \Lambda = \{(0, \tau, -\pi / 2)\}$ is oriented by $\partial_\tau \equiv H$; as the outward normal pointing vector at $\partial \Sigma$ is $\partial_\rho$ and $(\partial_\rho, \partial_\tau)$ is negatively oriented, we obtain that the boundary term coming from Stokes' formula must be $-[\bar \Lambda]$.}
$$
\dd [S_f] = -[\bar \Lambda] - \sum_{a \in \mathrm{crit}(f)} (-1)^{\mathrm{ind}_f(a)} [\Lambda_a],
$$
where $\ind_f(a)$ is the index of $a$ as a critical point of $\nabla^g f$, that is, the number of negative eigenvalues of the linearization of $\nabla^g f$ at the point $a$. Note that (up to taking $f_2$ very close to $f_1$), we have $y \notin \mathrm{crit}(f).$ Thus for each $a\in \mathrm{crit}(f)$ we may find a path $\gamma_a : [0, 1] \to \Sigma$ joining $a$ to $x$, and avoiding $y$. Setting 
$$
\theta_a = \left\{(\gamma_a(t), v)~:~v \in S_{\gamma_a(t)}\Sigma,~t \in [0, 1]\right\}, \quad a \in \mathrm{crit}(f),
$$
we have $\dd [\theta_a] = [\Lambda_x] - [\Lambda_a].$ The Poincar\'e-Hopf formula (see \cite[p.35]{milnor1997topology}) yields\footnote{Note that $\nabla^g f$ is actually inward pointing, but this is irrelevant since $\dim \Sigma = 2$.}
$$
\sum_{a \in \mathrm{crit}(f)} (-1)^{\ind_f(a)}= \chi(\Sigma).
$$
In particular, by setting 
$$
\displaystyle{
[S] = \frac{1}{\chi(\Sigma)}\left([S_f] - \sum_a (-1)^{\ind_f(a)} \theta_a\right),
}
$$
we obtain the first part of \eqref{eq:lambdaxs}. For the second part, we first note that $\theta_a \cap \Lambda_y = \emptyset$. Moreover, $S_f$ intersects (transversally) $\Lambda_y$ only at the point $(y, \nabla^gf(y) / \|\nabla^g f(y)\|)$. Looking at the orientations we get that $\int_N [S_f]\wedge [\Lambda_y] = 1$ (this follows from \eqref{eq:volume} and the fact that $\Lambda_y$ is oriented according to $\psi$). Finally the wavefront set condition follows from the transversality of the intersection, and the lemma follows.
\end{proof}
}

Before proving Theorem \ref{thm:points}, we state a result about regularization of currents; this is a version of the de Rham regularization procedure (see \cite[\S15, Proposition 1]{chern2012differentiable}) which takes into account the wavefront sets.

{
\begin{lemm}\label{lem:reg}
There are operators
$$
R_\varepsilon : \mathcal{D}'^\bullet(N) \to \Omega^\bullet(N), \quad A_\varepsilon :  \mathcal{D}'^\bullet(N) \to \mathcal{D}'^\bullet(N), \quad \varepsilon \in [0, 1],
$$
such that for any $u \in \mathcal{D}'^\bullet(N)$, the following holds.
\begin{enumerate}[label = \normalfont(\roman*)]
\item\label{item:chain}
We have the identities
$
R_\varepsilon - \id = \dd A_\varepsilon + A_\varepsilon \dd
$
 and $[\dd, R_\varepsilon] = 0$; 
\item\label{item:support}
The supports of $R_\varepsilon u$ and $A_\varepsilon u$ are contained in the $C\varepsilon-$neighborhood of the support of $u$ for some $C > 0$ independent of $\varepsilon$;
\item\label{item:wf} For any closed conical neighborhood $\Gamma$ of $\WF(u)$ (i.e. $\WF(u) \subset \Gamma^\circ$), there is $\varepsilon_0 > 0$ such that 
$
\WF(A_\varepsilon u) \subset \Gamma
$
for each $\varepsilon \in [0, \varepsilon_0]$, and moreover the families $(A_\varepsilon u)_{\varepsilon \in [0, \varepsilon_0]}$ and $(R_\varepsilon u)_{\varepsilon \in [0, \varepsilon_0]}$ are bounded in $\mathcal{D}'^\bullet_{\Gamma}(N);$
\item\label{item:convergence} We have $R_\varepsilon u \to u$ in  $\mathcal{D}'^\bullet(N)$ as $\varepsilon \to 0.$
\end{enumerate}
\end{lemm}
}
\begin{proof}
{
Let $X_1, \dots, X_n$ be vector fields on $N$ generating $TN$ everywhere, and denote the associated flows by $\varphi_{1, t}, \dots, \varphi_{n, t}$ for $t \in \R$. Let $\varepsilon > 0$ and $\chi \in C^\infty_c(\R^n, [0,1])$ such that $\chi = 1$ near $0$ and $\int_{\R^n} \chi(\mathbf{t}) \dd \mathbf{t} = 1$. For $u \in \mathcal{D}'^\bullet(N)$ we define
$$
R_\varepsilon u = \int_{\R^n} \chi(\mathbf t) \varphi_{1, \varepsilon t_1}^*\cdots \varphi_{n, \varepsilon t_n}^* u~ \dd \mathbf t.
$$
For $z \in N$ and $\mathbf{t} = (t_1, \dots, t_n) \in \R^n$ we will set $\Psi_{\varepsilon, z}(\mathbf{t}) = \Phi_{\varepsilon \mathbf{t}}(z)$ where
$$
\Phi_{\mathbf{t}} = \varphi_{n, t_n} \circ \cdots \circ \varphi_{1,  t_1}.
$$
We claim that $R_\varepsilon u$ is smooth. Indeed, if $u$ is a $0$-form, then we have
$$
R_\varepsilon u(z) = \langle \chi, \Psi_{\varepsilon, z}^*u\rangle, \quad z \in N,
$$
where the pairing is taken in $\R^n$. Indeed, this formula is true for $u$ smooth and thus it remains true for any distribution $u$ by continuity of the pullback $\Psi_{\varepsilon, z}^* : \mathcal{D}'^\bullet(N) \to \mathcal{D}'^\bullet(\R^n)$ (this follows from \cite[Theorem 6.1.2]{hor1} as $\Psi_{\varepsilon, z}$ is a submersion $\R^n \to N$ whenever $\varepsilon > 0$, since the vector fields $X_j$ generate $TN$). In particular $R_\varepsilon u$ is smooth, because $\Psi_{\varepsilon, z}$ depends smoothly on the variable $z$. If $u \in \Omega^k(N)$, we write locally $u = \sum_{\ell} u_\ell e_\ell$ for some basis $(e_\ell)$ of $\wedge^k T^*N$; writing $\Phi_{\varepsilon \mathbf{t}}^*e_\ell = \sum_j \alpha_{\ell, j}(\mathbf t) e_j$ we get by what precedes
$$
R_\varepsilon u(z) = \sum_{\ell, j} \langle \alpha_{\ell, j} \chi, \Psi_{\varepsilon,z}^*u_\ell \rangle e_j(z),
$$
and thus $R_\varepsilon u$ is smooth.
It is immediate to see that $R_\varepsilon u \to u$ in the distributional sense as $\varepsilon \to 0$, which is point \ref{item:convergence}. Next, note that
$$
(R_\varepsilon - \id)u = \int_{0}^\varepsilon \partial_r \left(\int_{\R^n} \chi(\mathbf t) \Phi_{r\mathbf{t}}^*u~ \dd \mathbf t\right)\dd r.
$$
By Cartan's formula one has
$
\partial_r \Phi_{r \mathbf{t}}^* = \dd B_{r \mathbf t} + B_{r \mathbf{t}} \dd
$
where $B_\mathbf{t} : \mathcal{D}'^\bullet(N) \to \mathcal{D}'^{\bullet - 1}(N)$ is defined by
$$
B_{\mathbf t} = \sum_{j=1}^n \varphi_{1, t_1}^* \cdots \varphi_{j, t_j}^* \iota_{X_j} \varphi_{j+1, t_{j+1}}^* \cdots \varphi_{n ,t_n}^*, \quad \mathbf t = (t_1, \dots, t_n) \in \R^n.
$$
Thus by setting 
$
\displaystyle{
A_\varepsilon = \int_{0}^\varepsilon \int_{\R^n} \chi(\mathbf t)B_{r \mathbf t} ~\dd \mathbf t \dd r
}
$
we obtain \ref{item:chain}. Property \ref{item:support} is clear and thus it remains to show that \ref{item:wf} holds. Let $u \in \mathcal{D}'^\bullet(N)$ and let $\Gamma$ be a conical neighborhood of $\WF(u)$. Take $(z_0, \xi_0) \in \complement \Gamma$, and a conical neighborhood $\Gamma_{0}$ of $\xi_0$ such that $\overline \Gamma_0 \cap \Gamma = \emptyset.$ Let $\varepsilon_0 > 0$ small enough so that $\Phi_{\varepsilon \mathbf t}^* (\Gamma_0) \cap \Gamma = \emptyset$ for any $\varepsilon \in [0, \varepsilon_0]$ and $\mathbf t \in \supp \chi$. Let $\omega \in \Omega^\bullet(N)$ be supported in a coordinate chart near $z_0$; we have for $\xi \in \Gamma_0$
$$
\begin{aligned}
\int_N \omega {e}^{i\langle \xi, \cdot \rangle} \wedge A_\varepsilon u 
&= \int_0^\varepsilon \int_{\R^n} \chi(\mathbf t) \left(\int_N  \omega {e}^{i \langle \xi, \cdot \rangle} \wedge B_{r \mathbf t}u\right) \dd \mathbf t \dd r.
\end{aligned}
$$
Thanks to the expression of $B_{r \mathbf t}$ one can see that the right hand side can be written as
\begin{equation}\label{eq:righthand}
\int_0^\varepsilon \int_{\R^n} \chi(\mathbf t)\left( \int_N f(\mathbf t,r) {e}^{i\langle \Xi(\mathbf t,r), \cdot\rangle} \wedge u\right) \dd \mathbf t \dd r
\end{equation}
where $f(\mathbf t,r)$ is a smooth function depending smoothly on $(\mathbf t,r)$ and $\Xi : \R^n \times [0, \varepsilon] \to T^*N$ is a smooth function satisfying $\Xi(\mathbf t, r) \in \Gamma_0$ on $\supp \chi$ and $|\Xi(\mathbf t, r)|\geqslant C |\xi|.$ Since $\Gamma$ does not intersect $\overline \Gamma_0$, the integral on $N$ in \eqref{eq:righthand} decays rapidly (i.e. faster than $\langle \xi \rangle^{-k}$ for any $k \geqslant 0$) as $\xi \to \infty$ and $\xi \in \Gamma_0$, with speed decay which is locally uniform with respect to $(\mathbf{t}, r) \in \supp\chi \times [0, \varepsilon]$. The result follows.
}
\end{proof}

\begin{proof}[Proof of Theorem \ref{thm:points}]
By \eqref{eq:uv} and {Lemma \ref{lem:dang}}, we have
$$
\begin{aligned}
\int_{M_\delta}[\Lambda_x] \wedge u_j &= \int_{M_\delta} [\Lambda_x] \wedge v_j \\
&= {-}\frac{1}{\chi(\Sigma)} \int_{M_\delta} [\bar \Lambda] \wedge v_j - \int_{M_\delta} \dd [S] \wedge v_j \\
&= 0
\end{aligned}
$$
{since $\supp(v_j) \subset \Gamma_+^\eta$ with $\Gamma_+^\eta \cap \bar \Lambda = \emptyset$ and $\dd v_j = \dd(u_j - \dd f_j) = 0$ by \eqref{eq:propu}.}
This shows that $\eta_{x,y}(s)$ has no pole at $s=0$ and that $\eta_{x,y}(0) = {-}\langle [\Lambda_x], Y(0) \iota_X [\Lambda_y] \rangle.$
Now since $Y(0)\iota_X [\Lambda_y]$ is compactly supported in $M_\delta^\circ$ we may view this pairing as a pairing on $N$, so that
$$
\eta_{x,y}(0) = -\int_{N} [\Lambda_x] \wedge Y(0) \iota_X [\Lambda_y].
$$
{
From \eqref{eq:inverse} we deduce that $\dd Y(0) \iota_X [\Lambda_y] = [\Lambda_y] + u$ for some current $u$ supported far from $M_{\delta /2}$. Let $\varepsilon > 0$ small. As $\dd [ \Lambda_x] = 0$, we have by Lemma \ref{lem:reg} that
$
[\Lambda_x] = R_\varepsilon [\Lambda_x] - \dd A_\varepsilon [\Lambda_x],
$
with $\WF(A_\varepsilon [\Lambda_x])$ close to $\WF([\Lambda_x])$; thus we may compute
$$
\begin{aligned}
\int_{N} [\Lambda_x] \wedge Y(0) \iota_X [\Lambda_y] 
&= \int_N R_\varepsilon [\Lambda_x] \wedge Y(0) \iota_X [\Lambda_y] - \int_N \dd A_\varepsilon [\Lambda_x] \wedge ([\Lambda_y] + u) \\
& = -\int_N \dd R_\varepsilon [S] \wedge Y(0) \iota_X [\Lambda_y] - \frac{1}{\chi(\Sigma)} \int_N R_\varepsilon[\bar \Lambda] \wedge  Y(0) \iota_X [\Lambda_y] \\
& \hspace{2cm}- \int_N \dd A_\varepsilon [\Lambda_x] \wedge ([\Lambda_y] + u),
\end{aligned}
$$
where we used Lemma \ref{lem:dang} in the last equality.
By point \ref{item:support} of Lemma \ref{lem:reg}, the second integral vanishes for small $\varepsilon$ since $\bar \Lambda \cap \supp(Y(0) \iota_X [\Lambda_y]) = \emptyset$; the third one also vanishes to zero as $\supp([\Lambda_x]) \cap \supp([\Lambda_y]) = \emptyset.$ Finally the first one writes
$$
 \int_N R_\varepsilon [S] \wedge ([\Lambda_y] + u),
$$
and thus it converges to $1/ \chi(\Sigma)$ as $\varepsilon \to 0$ thanks to the second equation of \eqref{eq:lambdaxs} and points \ref{item:support}, \ref{item:wf} and \ref{item:convergence} of Lemma \ref{lem:reg} (since $\supp([S]) \cap \supp(u) = \emptyset$). This concludes the proof of Theorem \ref{thm:points}.}
\end{proof}

\bibliographystyle{alpha}
\bibliography{bib}

\begin{thebibliography}{MW97}

\bibitem[Bas93]{basmajian1993orthogonal}
Ara Basmajian.
\newblock The orthogonal spectrum of a hyperbolic manifold.
\newblock {\em American Journal of Mathematics}, 115(5):1139--1159, 1993.

\bibitem[BK10]{bridgeman2010hyperbolic}
Martin Bridgeman and Jeremy Kahn.
\newblock Hyperbolic volume of manifolds with geodesic boundary and
  orthospectra.
\newblock {\em Geometric and Functional Analysis}, 20(5):1210--1230, 2010.

\bibitem[Bon15]{bonthonneau2015resonances}
Yannick Bonthonneau.
\newblock {\em R{\'e}sonances du laplacien sur les vari{\'e}t{\'e}s {\`a}
  pointes}.
\newblock PhD thesis, Universit{\'e} Paris Sud-Paris XI, 2015.

\bibitem[Bri11]{bridgeman2011orthospectra}
Martin Bridgeman.
\newblock Orthospectra of geodesic laminations and dilogarithm identities on
  moduli space.
\newblock {\em Geometry \& Topology}, 15(2):707--733, 2011.

\bibitem[BT16]{bridgeman2016identities}
Martin Bridgeman and Ser~Peow Tan.
\newblock Identities on hyperbolic manifolds.
\newblock {\em Handbook of Teichm{\"u}ller theory}, 5:19--53, 2016.

\bibitem[Cal10]{calegari2010chimneys}
Danny Calegari.
\newblock Chimneys, leopard spots and the identities of basmajian and
  bridgeman.
\newblock {\em Algebraic \& Geometric Topology}, 10(3):1857--1863, 2010.

\bibitem[Cha21]{chaubet2021closed}
Yann Chaubet.
\newblock Closed geodesics with prescribed intersection numbers.
\newblock {\em arXiv preprint arXiv:2103.16301}, 2021.

\bibitem[DG16]{dyatlov2016pollicott}
Semyon Dyatlov and Colin Guillarmou.
\newblock Pollicott--ruelle resonances for open systems.
\newblock In {\em Annales Henri Poincar{\'e}}, volume~17, pages 3089--3146.
  Springer, 2016.

\bibitem[dR12]{chern2012differentiable}
Georges de~Rham.
\newblock {\em Differentiable Manifolds: Forms, Currents, Harmonic Forms}.
\newblock Grundlehren der mathematischen Wissenschaften. Springer Berlin
  Heidelberg, 2012.

\bibitem[DR20]{dang2020poincar}
Nguyen~Viet Dang and Gabriel Rivi{\`e}re.
\newblock Poincar\'e series and linking of legendrian knots.
\newblock {\em arXiv preprint arXiv:2005.13235}, 2020.

\bibitem[Had18]{hadfield2018zeta}
Charles Hadfield.
\newblock Zeta function at zero for surfaces with boundary.
\newblock {\em arXiv preprint arXiv:1803.10982}, 2018.

\bibitem[H{\"o}r90]{hor1}
L.~H{\"o}rmander.
\newblock {\em The analysis of linear partial differential operators:
  Distribution theory and Fourier analysis}.
\newblock Springer Study Edition. Springer-Verlag, 1990.

\bibitem[Kli11]{klingenberg2011riemannian}
Wilhelm~P.A. Klingenberg.
\newblock {\em Riemannian Geometry}.
\newblock De Gruyter, Berlin, Boston, 03 May. 2011.

\bibitem[Lau11]{laudenbach2011transversalite}
F.~Laudenbach.
\newblock {\em Transversalit{\'e}, courants et th{\'e}orie de Morse: un cours
  de topologie diff{\'e}rentielle}.
\newblock Ecole Polytechnique, 2011.

\bibitem[MW97]{milnor1997topology}
J.~Milnor and D.W. Weaver.
\newblock {\em Topology from the Differentiable Viewpoint}.
\newblock Princeton Landmarks in Mathematics and Physics. Princeton University
  Press, 1997.

\bibitem[ST76]{singer1976lecture}
I.~M. Singer and J.~A. Thorpe.
\newblock {\em Lecture Notes on Elementary Topology and Geometry}.
\newblock Springer Verlag, Berlin, Boston, 1976.

\bibitem[Zwo12]{zworski}
M.~Zworski.
\newblock {\em Semiclassical Analysis}.
\newblock Graduate studies in mathematics. American Mathematical Society, 2012.

\end{thebibliography}

\end{document}